\newtheorem{theorem}{Theorem}[section]
\newtheorem{proposition}[theorem]{Proposition}
\theoremstyle{definition}
\newtheorem{remark}[theorem]{Remark}
\author[K. A. Santos]{Karine de Almeida Santos}
\address{Departamento de Matem\'atica y Ciencias\\
	Universidad de San Andr\'es\\ Vito Dumas, 284\\ Victoria, Provincia de Buenos Aires, Argentina.}
\email{kdealmeidasantos@udesa.edu.ar}
\begin{document}

\title{Dynamics of an isosceles problem generated by a perturbation of Euler's collinear solution}

\begin{abstract}
This paper presents a study of the isosceles problem resulting by a perturbation of Euler's collinear solution under Newtonian gravitational attraction of three bodies in space. After the Hamiltonian was obtained, a circumference of relative equilibria points was found. The original system was subsequently reduced to another system with two degrees of freedom, periodic in the time, where there is now a single point of equilibrium. Linear and parametric stability were discussed in this simplified model of the three-body problem.

\end{abstract}

\keywords{isosceles problem, reduction degree freedom of Hamiltonian system, resonances, parametric stability, boundary curves of stability/instability.
}

\maketitle

\section{Introduction}\label{sec1}
An isosceles solution of the three-body problem is a solution in which the three masses, at any given moment, form the configuration of an isosceles triangle that does not degenerate into an equilateral triangle or a line segment. Since the coordinates of a solution to the three-body problem are analytic functions of time, if a solution is collinear or equilateral for any interval of time, no matter how small, it remains collinear or equilateral, as the case may be, for all time. Thus, such particular configurations only occur for isolated values of time, and consequently, the base of the isosceles triangle is well defined.

We recall that in an isosceles solution of the three-body problem, the masses at the base of the triangular configuration must be equal. Using this information, it is proved that there are exactly three types of isosceles solutions, one planar with zero angular momentum and masses at the base located symmetric with respect to a fixed axis and two spatial, both with nonzero angular momentum. In one of these the masses at the base are located symmetric with respect to the plane through the center of mass and orthogonal to the angular momentum, in the other spatial type the masses at the base are located symmetric with respect to the fixed line through the center of mass parallel to the angular momentum. These facts can be seen in \S\S 344-346 of Wintner's book \cite{wintner} together with Figures 12(i)-(iii) in \S 346. Se also Cabral \cite{hild-isosceles}.

For both the planar case and the spatial case with a symmetry plane, collisions may occur, and when this happens, the motion is defined only during a finite time, unless regularizations are made. We will work with the spatial isosceles solution in which the masses at the base of the triangle move symmetrically to the fixed line defined by the direction of the angular momentum. Along this solution collisions cannot occur and the motion is defined for all time.

\section{Statement of the problem}\label{sec2}

Initially, let us consider the Euler's collinear solution of the three-body problem where mass $m_3$ remains fixed at the origin of the system with null vector position $\overline{r}_3^e$, and the others two masses $m_1$ and $m_2$ with equal value $m$ are moving elliptical and symmetrically on the $xy$-plane with respect to $m_3$, see Figure \ref{fig:Euler's solution}. Owing to symmetry of this problem, the masses $m_1$ and $m_2$ located symmetrically, have their respective vectors position $\overline{r}_1^e$ and $\overline{r}_2^e$, satisfy $\overline{r}_2^e=-\overline{r}_1^e$.

\begin{figure}[!htb]
	\centering
	\includegraphics[scale=.4]{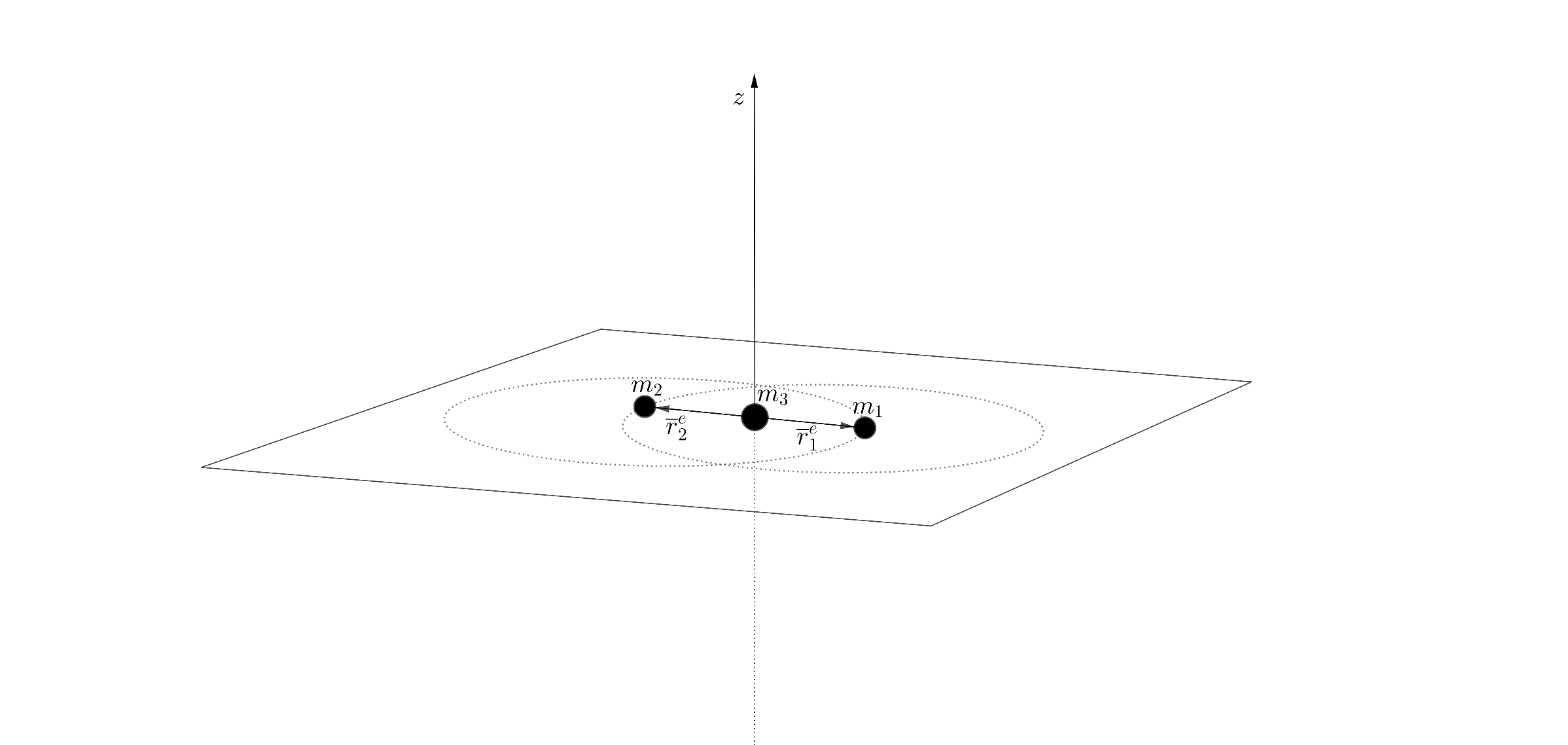}
	\caption{Collinear Euler's solution.}
	\label{fig:Euler's solution}
\end{figure}

The motion of the mass $m_1$ is described by
\begin{equation}\label{kepler}
	\ddot{\overline{r}_1^e}=-\dfrac{G(m+4m_3)}{4\Vert \overline{r}_1^e\Vert^3}\overline{r}_1^e,
\end{equation}
where $G$ is the gravitational constant, $\Vert \cdot \Vert$ represents the usual norm and $\overline{r}_1^e$ it is Kepler's solution on the plane. It is known from his theory that $\overline{r}_1^e=\rho \hat{e}_1$, $\hat{e}_1=(\cos\nu,\sin\nu,0)$, where
\begin{equation}\label{relacao2}
	\rho=\dfrac{p}{1+\epsilon\cos\nu},
	\quad
	\dot{\nu}=\dfrac{c}{\rho^2} 
	\quad \mbox{and} \quad
	\ddot{\rho}=\dfrac{c^2}{\rho^3}\dfrac{\epsilon\cos\nu}{(1+\epsilon\cos\nu)}.
\end{equation}
Here, $\rho=\Vert \overline{r}_1^e \Vert$ represents the distance of the attractor focus, $\nu$ is the true anomaly, $\epsilon$ is the eccentricity, $c$ is area constant and $p=a(1-\epsilon^2)$ is the orbit parameter, where $a$ is the major semiaxis of the ellipse.

Next we will build the spatial isosceles solution from the perturbation of the vectors $\overline{r}_1^e$ and $\overline{r}_2^e$ of the Euler's collinear solution. For this, consider $\overline{v}=(v_1,v_2,0)$ and $\overline{w}=(0,0,w_3)$ two vectors that perform the perturbation in the Euler's collinear solution as follows
\begin{eqnarray}\label{r1r2r3}
	r_1=(\overline{r}_1^e+\overline{v})+\overline{w}
	\qquad  \mbox{and} \qquad
	r_2=-(\overline{r}_1^e+\overline{v})+\overline{w},
\end{eqnarray}
and illustrated by Figure \ref{fig:perturbation}.
Consequently, to ensure that the center of mass remains at the origin, the position vector of $m_3$ is given by:
\begin{equation}\label{r3}
	r_3=-\frac{2m}{m_3}\overline{w}.
\end{equation}

\begin{figure}[!htb]
	\centering
	\includegraphics[scale=.4]{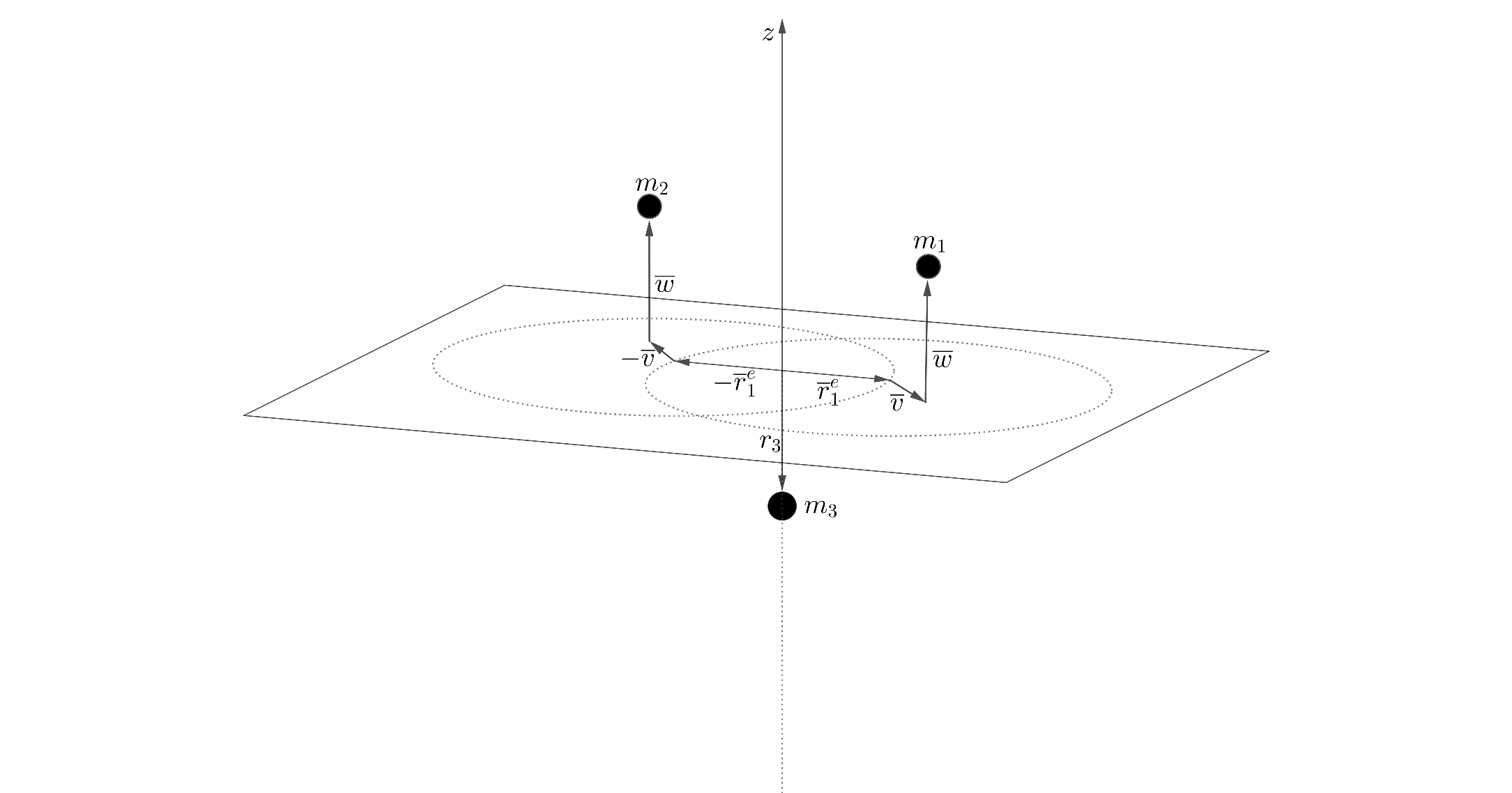}
	\caption{Perturbation of Euler's collinear solution.}
	\label{fig:perturbation}
\end{figure}

\begin{proposition}
	There are vector functions $\overline{v}=\overline{v}(t)$ and $\overline{w}=\overline{w}(t)$ such that the vectors $r_1,\;r_2$ and $r_3$ at \eqref{r1r2r3} and \eqref{r3} are three-body problem solutions.
\end{proposition}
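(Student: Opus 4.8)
The plan is to substitute the ansatz \eqref{r1r2r3}--\eqref{r3} directly into the Newtonian equations of motion $m_i\ddot r_i = \sum_{j\neq i} G m_i m_j (r_j-r_i)/\Vert r_j-r_i\Vert^3$ and to show that, thanks to the built-in symmetry and the center-of-mass normalization, the nine scalar equations collapse to a determined, self-consistent system of ordinary differential equations for the unknown functions $\overline v$ and $\overline w$. Once this reduction is in place, the existence of $\overline v(t)$ and $\overline w(t)$ follows from the standard Picard--Lindel\"of existence theorem applied to that system, since its right-hand side is real-analytic away from collisions.

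First I would record the relevant separation vectors. Writing $\overline R = \overline r_1^e + \overline v$ (an in-plane vector) and $\mu = (m_3+2m)/m_3$, one computes $r_2-r_1 = -2\overline R$ together with $r_1-r_3 = \overline R + \mu\overline w$ and $r_2 - r_3 = -\overline R + \mu \overline w$. Because $\overline R$ lies in the $xy$-plane while $\overline w$ is directed along the $z$-axis, these two pieces are orthogonal, so the mutual distances are $\Vert r_2-r_1\Vert = 2\Vert \overline R\Vert$ and $\Vert r_1-r_3\Vert = \Vert r_2-r_3\Vert = (\Vert \overline R\Vert^2 + \mu^2 w_3^2)^{1/2}$. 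In particular the configuration is genuinely isosceles, with the two equal sides meeting at $m_3$ on the symmetry axis.

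Next I would exploit two reductions. Since the total force vanishes, the choice \eqref{r3} keeps the center of mass at the origin and makes the equation for $m_3$ a consequence of those for $m_1$ and $m_2$; and the reflection $(x,y,z)\mapsto(-x,-y,z)$, which interchanges $r_1$ and $r_2$ and fixes $r_3$, is a symmetry of the potential, so the $m_2$ equation is equivalent to the $m_1$ equation. It therefore suffices to impose Newton's law for $m_1$. Substituting and separating the in-plane and out-of-plane components (legitimate because $\overline R\perp\overline w$) yields the closed coupled system
\begin{align*}
	\ddot{\overline R} &= -\left[\frac{Gm}{4\Vert \overline R\Vert^3} + \frac{Gm_3}{(\Vert \overline R\Vert^2+\mu^2 w_3^2)^{3/2}}\right]\overline R, \\
	\ddot w_3 &= -\frac{Gm_3\,\mu\,w_3}{(\Vert \overline R\Vert^2+\mu^2 w_3^2)^{3/2}} .
\end{align*}
These are three scalar second-order equations for the three scalar unknowns contained in $\overline R$ and $w_3$, hence a determined system. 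A useful consistency check is that setting $\overline v\equiv 0$ and $\overline w\equiv 0$ forces $\overline R=\overline r_1^e$ and reduces the first equation to $\ddot{\overline r_1^e} = -G(m+4m_3)(4\Vert \overline r_1^e\Vert^3)^{-1}\overline r_1^e$, which is exactly Kepler's equation \eqref{kepler}; so the unperturbed Euler solution is recovered and the reduction is correct.

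Finally, I would conclude by existence theory: on the open set $\{\overline R\neq 0\}$ the right-hand side above is real-analytic, so for any admissible initial data the coupled system has a unique solution $(\overline R(t),w_3(t))$ on some time interval, and setting $\overline v(t) = \overline R(t)-\overline r_1^e(t)$ and $\overline w(t)=(0,0,w_3(t))$ produces the required functions. I expect the main obstacle to be not the ODE existence, which is routine, but verifying the self-consistency of the reduction---namely that the three bodies' equations do not overdetermine $\overline v$ and $\overline w$. That compatibility is exactly what the symmetric placement of $m_1,m_2$ and the center-of-mass constraint \eqref{r3} are designed to guarantee, and carrying out the component computation carefully is the heart of the argument.
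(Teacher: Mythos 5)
Your proposal is correct and follows essentially the same route as the paper: substitute the ansatz, use the symmetric placement of $m_1,m_2$ and the center-of-mass constraint to collapse the nine scalar equations to a determined system for the in-plane and vertical parts (your displayed system is exactly the paper's \eqref{sisperturbkepler} written in the variable $\overline{R}=\overline{r}_1^e+\overline{v}$, with $\mu=M/m_3$ so that $Gm_3\mu=GM$), and conclude by the existence and uniqueness theorem for ODEs. The only point the paper adds beyond your argument is that, by analyticity, the local solution coincides with the spatial isosceles solution of the third type (masses at the base symmetric with respect to the line through the center of mass parallel to the angular momentum), which is collision-free, so the solution is in fact defined for all time rather than merely on a local interval --- a refinement your Picard--Lindel\"of conclusion omits but which the statement as written does not strictly require.
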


\begin{proof} 
	Like $r_1$, $r_2$ and $r_3$ are three-body problem solutions, that is, they satisfy the equations
	\begin{eqnarray}\label{3corpos}
		\ddot{r_1}&=&\frac{Gm}{d_{12}^3}(r_2-r_1)+\frac{Gm_3}{d_{13}^3}(r_3-r_1)\nonumber\\
		\ddot{r_2}&=& \frac{Gm}{d_{12}^3}(r_1-r_2)+\frac{Gm_3}{d_{23}^3}(r_3-r_2)\\
		\ddot{r_3}&=&\frac{Gm}{d_{13}^3}(r_1-r_3)+\frac{Gm}{d_{23}^3}(r_2-r_3),\nonumber
	\end{eqnarray}
	where	
	$$d_{12}=\Vert r_1-r_2\Vert,
	\quad
	d_{13}=\Vert r_1-r_3\Vert
	\quad \mbox{and} \quad
	d_{23}=\Vert r_2-r_3\Vert
	.$$
	Using \eqref{r1r2r3} and \eqref{r3}, we can write $r_1-r_2=2(\overline{r}_1^e+\overline{v})$ and
	$r_1+r_2-2r_3=2\dfrac{M}{m_3}\overline{w}$, where $M=2m+m_3$ it is the total mass. Furthermore, as $m_1$ and $m_2$ remain equidistant from $m_3$, we have $d_{13}=d_{23}$. Therefore \eqref{3corpos} can be rewritten as follows
	
	\begin{equation}\label{sisperturbkepler}
		\begin{array}{rl}
			\ddot{\overline{r}_1^e}+\ddot{\overline{v}}&=-G\left( \dfrac{2m}{\overline{d}_{12}^3}+\dfrac{m_3}{\overline{d}_{13}^3}
			\right)(\overline{r}_1^e+\overline{v})\\
			\ddot{\overline{w}}&=-\dfrac{GM}{\overline{d}_{13}^3}\overline{w},
		\end{array}
	\end{equation}
	where
	$$
	\overline{d}_{12}=\Vert 2(\overline{r}_1^e+\overline{v}) \Vert
	\quad \mbox{and} \quad
	\overline{d}_{13}=\Big\Vert (\overline{r}_1^e+\overline{v})+\dfrac{M}{m_3}\overline{w}\Big\Vert.
	$$
	The vector function $\overline{r}_1^e(t)$ is a solution to Kepler's problem \eqref{kepler} and describes the motion of mass $m_1$ in the considered Euler's collinear solution. Note that $\overline{v}=0$, $\overline{w}=0$ is an equilibrium solution of this system, because as mentioned, $\overline{r}_1^e$ is a solution to Kepler's problem. Thus, \eqref{sisperturbkepler} is a second-order and non-autonomous system in $\overline{v}$, $\overline{w}$ with analytical data. Therefore, by the existence and uniqueness theorem in ordinary differential equations, for each initial condition $\overline{v}_0$, $\overline{w}_0$, $\dot{\overline{v}}_0$, $\dot{\overline{w}}_0$, there is only one solution to this system defined on an open time interval, whose coordinates are analytical functions on time. As it determines the vectors in \eqref{r1r2r3} that form an isosceles configuration in this interval, by analyticity, the solution \eqref{r1r2r3} coincides with the solution isosceles at all times it is defined, and because it is of the third type, and therefore it exists for all time. 
\end{proof}

The equations \eqref{sisperturbkepler} define a time periodic Hamiltonian system with three degrees of freedom. As will be seen in section \ref{sec6} it will be reduced to a time periodic Hamiltonian system with two degrees of freedom which describes a simplified model of the three-body problem and have one relative equilibrium. The reduced system is still periodic and is time dependent, so we can not use Arnold's theorem to study the stability of this equilibrium. Therefore in section \ref{sec7} we  will study the conditional stability, that is we will study the parametric stability of this equilibrium point.

\section{Hamiltonian in rotating-pulsating coordinates}\label{sec3}

In this section we consider a rotating coordinates system with angular speed given by the true anomaly of the elliptic motion. Together with the rotating coordinates we introduce pulsating coordinates via the radius vector of the elliptic motion. 
After these coordinates change, we take the true anomaly $\nu$ as time variable. Proceeding with the computations we arrive at an expression of the Hamiltonian in these coordinates. 

First, to obtain a simpler equation for the problem, let us regularize the measurement units. Applying a new scale in a vertical vector, $\overline{w}$, with $w_3=\sqrt{\dfrac{m_3}{M}}v_3$, and denoting $v=(v_1,v_2,v_3)$,  \eqref{sisperturbkepler} becomes :

\begin{equation}\label{kepler perturb}
	\ddot{v}=\nabla_v U(v,t,m,m_3)-\ddot{\overline{r}_1^e}, 
\end{equation}
where the potential function is $U(v,t,m,m_3)=\dfrac{Gm}{4d_{12}}+\dfrac{Gm_3}{d_{13}},$ with $d_{12}=\Vert \overline{r}_1^e+\overline{v}\Vert$ and $d_{13}=\Big\Vert (\overline{r}_1^e+\overline{v})+\sqrt{\dfrac{M}{m_3}}v_3e_3\Big\Vert$.
Then, in \eqref{kepler perturb}, denote $\mu=\dfrac{m}{m_3}$, consider $l$ as a length unit such that $\dfrac{Gm_3}{l^3}=1$ and put
$$
l\overline{r}_1^e=r_1^e
\qquad \mbox{and}\qquad 
lv=z,\qquad \mbox{where} \qquad
z=(z_1,z_2,z_3).
$$
Finally, due to the homogeneity of the potential function and because $\overline{r}_1^e=\rho(\cos\nu,\sin\nu,0)$, we obtain 
\begin{equation}\label{ode 3bp}
	\ddot{z}=\nabla U(z,t,\mu)-\ddot{r_1^e},
\end{equation}
where the potential function is now
$$U(z,t,\mu)=\dfrac{\mu}{4d_{12}}+\dfrac{1}{d_{13}},$$
with 
\begin{eqnarray*}
	d_{12}&=&\sqrt{(\rho\cos\nu+v_1)^2+(\rho\sin\nu+v_2)^2},\\
	d_{13}&=&\sqrt{(\rho\cos\nu+v_1)^2+(\rho\sin\nu+v_2)^2+(2\mu+1)v_3^2}.
\end{eqnarray*}

Remember that $\hat{e}_1=(\cos\nu,\sin\nu,0)$, then $r_1^e=\rho\hat{e}_1$ and
$
\ddot{r}_1^e=(\ddot{\rho}-\rho\dot{\nu}^2)\hat{e}_1=(\ddot{\rho}-\rho\dot{\nu}^2)\Omega e_1$, where $e_1$ it is a canonical vector of $\mathbb{R}^3$, $\Omega(\nu)=\Omega$ is the rotation matrix given by $\Omega=
\begin{bmatrix}
	\cos\nu  &- \sin\nu  & 0  \\
	\sin\nu  & \cos\nu  & 0\\
	0        &    0     & 1
\end{bmatrix}.$
Furthermore, using  second and third Kepler's relation in \eqref{relacao2}, we have
\begin{equation*}
	\ddot{r}_1^e=-\dfrac{c^2}{\rho^3}\dfrac{1}{1+\epsilon\cos\nu}\Omega e_1.
\end{equation*}

Let us pass the vector $z$ to the rotating-pulsating coordinates system, as commented at the beginning of this section. Applying the first change of coordinate given by rotation, $z=\Omega \zeta$, we compute:
\begin{eqnarray}\label{newexpression}
	\ddot{z}&=&\Omega[
	\ddot{\zeta}+2\dot{\nu}\Sigma\dot{\zeta}+\ddot{\nu}\Sigma 
	\zeta+\dot{\nu}^2\Sigma^2\zeta],
\end{eqnarray}
where the prime represents the derivative with respect to true anomaly and $\Omega'=\Omega\Sigma$ and $\Omega''=\Omega\Sigma^2$, with $\Sigma=\begin{bmatrix}
	0 & -1  & 0  \\
	1 & 0  & 0\\
	0 & 0  & 0
\end{bmatrix}
$.

The second change of coordinates defined in terms of the radius vector $\rho$ at \eqref{relacao2} is given by $\zeta=\rho x$. Using the chain rule we obtain
\begin{eqnarray*}
	\dot{\zeta}= \dot{\rho}x+\rho\dot{\nu}x'\label{d1x}
	\qquad \mbox{and} \qquad
	\ddot{\zeta}=\ddot{\rho}x+\left(2\dot{\rho}\dot{\nu}+\rho\ddot{\nu}\right)x'+\rho\dot{\nu}^2x''.
\end{eqnarray*}
Because $\dot{\nu}\rho^2=c$ follows that
$
2\dot{\rho}\dot{\nu}+\rho\ddot{\nu}=0, 
$
so the above equation is now $\ddot{\zeta}=\ddot{\rho}x+\rho\dot{\nu}^2x''$, and \eqref{newexpression} is given by
\begin{equation*}
	\ddot{z}=\Omega\left[\varrho\dot{\nu}^2(x''+2\Sigma x')+(\ddot{\rho}I+\dot{\nu}^2\rho\Sigma^2)x)\right],
\end{equation*}
where $I$ is identity matrix of order three.

Thus, using the second and third relation in \eqref{kepler}, the above equation is 
\begin{equation*}
	\ddot{z}=\dfrac{c^2}{\rho^3}\Omega\left[
	x''+2\Sigma x'+\Sigma^2x+\dfrac{\epsilon\cos\nu}{1+\epsilon\cos\nu}x
	\right].
\end{equation*}
Finally, using the rotation invariance and the homogeneity of the potential function we have $\nabla_zU(z,t,\mu)=\dfrac{1}{\rho^2}\Omega \nabla_x W(x,\mu)$, where $W(x,\mu)=\dfrac{\mu}{4d_1}+\dfrac{1}{d_2},$
with
$
d_{1}=\sqrt{(1+x_1)^2+x_2^2} \quad\mbox{and}\quad
d_{2}=\sqrt{(1+x_1)^2+x_2^2+(2\mu+1)x_3^2}.
$
Then, \eqref{ode 3bp} can be rewritten as
\begin{equation*}
	x''+2\Sigma x'+\Sigma^2x=\dfrac{1}{1+\epsilon\cos\nu}\left(-\epsilon\cos\nu\, x+\dfrac{p}{c^2}\nabla_x W(x,\mu)+e_1\right).
\end{equation*}
From the first and third equations in \eqref{relacao2} we have $\displaystyle{\dfrac{\rho}{c^2}= \frac{1}{\kappa(1+\epsilon \cos \nu)}}$,
so by defining $y=x'+\Sigma x$, we obtain the following first order system
\begin{equation}\label{pulsante}
	x'=y-\Sigma x \qquad
	y'=-\Sigma y+\nabla_x V(x,\nu,\mu,\epsilon),
\end{equation}
whose Hamiltonian $H: \Gamma\times\mathbb{R}^3\times \mathbb{R}\times I\times [0,1)\longrightarrow \mathbb{R}$ is given by
\begin{equation}\label{hinicial}
	H(x,y,\nu,\mu,\epsilon)=\dfrac{1}{2}\Vert y \Vert^2-V(x,\nu,\mu,\epsilon)-\langle \Sigma x,y \rangle,
\end{equation}
where $\Gamma=\{(x_1,x_2,x_3)\in\mathbb{R}^3; x_1\neq -1 \mbox{ or }x_2\neq0\}$, $I$ is a subset of positive real numbers and
\begin{equation}\label{potencialgeral}
	V(x,\nu,\mu,\epsilon)
	=\dfrac{1}{1+\epsilon\cos\nu}\left(-\dfrac{1}{2}\epsilon\cos\nu\Vert x\Vert^2+\dfrac{1}{\kappa}W(x,\mu)+x_1\right),
	\quad \kappa=\frac{\mu+4}{4}.
\end{equation}

\section{Relative Equilibria}\label{sec4}

\begin{proposition}
	The system \eqref{pulsante} admits a family of equilibria points in the form $z^*=(x^*,y^*)=((1+x_1^*),x_2^*,0,-x_2^*,(1+x_1^*),0)$, where $((1+x_1)^*,x_2^*)$ belongs to the unitary radius circumference centered at $(-1,0)$ in the plane $x_1x_2$.
\end{proposition}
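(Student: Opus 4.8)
The plan is to impose equilibrium on the first-order system \eqref{pulsante}, using crucially that the system is non-autonomous, so a genuine equilibrium $(x^*,y^*)$ must annihilate the right-hand side \emph{for every value of the time $\nu$}. Setting $x'=0$ gives at once $y=\Sigma x$; feeding this into $y'=0$ collapses the problem to the single vector equation $\nabla_x V(x,\nu,\mu,\epsilon)=\Sigma^2 x$. Since $\Sigma^2=\mathrm{diag}(-1,-1,0)$, this says $\nabla_x V=(-x_1,-x_2,0)$, and it has to hold identically in $\nu$.

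First I would differentiate \eqref{potencialgeral}, obtaining $\nabla_x V=\frac{1}{1+\epsilon\cos\nu}\bigl(-\epsilon\cos\nu\,x+\frac{1}{\kappa}\nabla_x W+e_1\bigr)$, with $\nabla_x W$ read off from $W=\frac{\mu}{4d_1}+\frac{1}{d_2}$. The third component of the equilibrium equation then becomes $x_3\bigl(\epsilon\cos\nu+\frac{2\mu+1}{\kappa d_2^3}\bigr)=0$. Here the bracket is the sum of a $\nu$-dependent term and a strictly positive one, so it cannot vanish for all $\nu$; this forces $x_3=0$. I expect this to be the main conceptual step: it is the \emph{identically in $\nu$} requirement that kills the vertical degree of freedom and confines the equilibria to the plane $x_3=0$.

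With $x_3=0$ one has $d_1=d_2=:d$, and the decisive algebraic cancellation appears: because $\kappa=\frac{\mu}{4}+1$, the two attracting contributions in $\frac{1}{\kappa}\nabla_x W$ coalesce into the single term $-d^{-3}(1+x_1,x_2,0)$. After clearing the factor $1+\epsilon\cos\nu$, the first two components reduce to the $\nu$-free relations $(1+x_1)(1-d^{-3})=0$ and $x_2(1-d^{-3})=0$. Either $d=1$, or else $1+x_1=0$ and $x_2=0$ simultaneously; the latter point is $(-1,0,0)\notin\Gamma$ and is therefore excluded. Hence $d=1$, i.e. $(1+x_1)^2+x_2^2=1$, the unit circle centered at $(-1,0)$ in the $x_1x_2$-plane. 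Reading off $y=\Sigma x=(-x_2,x_1,0)$ on this circle then gives exactly the family of equilibria in the stated form.

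The calculations themselves are routine; the only real obstacle is the bookkeeping in $\nabla_x W$ together with the observation that the value of $\kappa$ is precisely what makes the two Newtonian terms merge into one $d^{-3}$ factor. Without that cancellation the equilibrium condition would be a transcendental relation between $x_1$ and $x_2$ rather than the clean geometric condition $d=1$; it is this merging that produces a whole circle of relative equilibria.
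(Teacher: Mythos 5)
Your proposal is correct and follows essentially the same route as the paper: impose $(x',y')=(0,0)$ identically in $\nu$, use the vanishing of the third component of $\Sigma^2x$ to force $x_3=0$, exploit that $\kappa=\frac{\mu+4}{4}$ makes $\frac{1}{\kappa}B(x,\mu)$ collapse to $d^{-3}$ so the planar equations become $(1+x_1)(1-d^{-3})=0$ and $x_2(1-d^{-3})=0$, and exclude the singular point $(-1,0,0)$ to conclude $d=1$. The only cosmetic difference is that the paper demands the equilibrium conditions hold for all $(\nu,\mu,\epsilon)$ jointly, whereas you fix $(\mu,\epsilon)$ and use only the identity in $\nu$, which is the more natural reading and yields the same circle of relative equilibria.
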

\begin{proof}
	The equilibria points are obtained by solving $(x',y')=(0,0)$ for all $\nu\in\mathbb{R}$, $\mu\in I$, and $\epsilon\in[0,1)$. That is, by \eqref{pulsante}, we have
	\begin{eqnarray}
		y&=&\Sigma x \label{eq1perturb}\\
		\nabla_{x} V(x,\nu,\mu,\epsilon)&=&\Sigma^2x\label{eq2perturb},
	\end{eqnarray}
	for all $\nu\in\mathbb{R}$, $\mu\in I$, and $\epsilon\in[0,1)$.
	Additionally, 
	$$
	\nabla_{x}V(x,\nu,\mu,\epsilon)=\dfrac{1}{1+\epsilon\cos\nu}\left(-\epsilon\cos\nu x+\dfrac{1}{\kappa}\nabla_{x}W(x,\mu)+e_1\right),
	$$
	where
	\begin{equation}\label{parciasW}
		\nabla_{x}W(x,\mu)=-\left(B(x,\mu)(1+x_1),B(x,\mu) x_2,\dfrac{(2\mu+1)}{d_{2}^3}x_3\right),
	\end{equation}
	with
	$$B(x,\mu)=\dfrac{\mu}{4d_{1}^3}+\dfrac{1}{d_{2}^3}.$$
	
	Since the third entry of $\Sigma^2 x$ is zero, the third equation of \eqref{eq2perturb} is satisfied for any $\nu\in\mathbb{R}$, $\mu\in I$, and $\epsilon\in[0,1)$ if, and only if, $x_3=0$.
	When this occurs, we have $d_{1}=d=d_{2}=\sqrt{(1+x_1)^2+x_2^2}$. Thus, the first two equations of the system \eqref{eq2perturb} can be rewritten as follows:
	\begin{eqnarray*}
		\dfrac{1}{1+\epsilon\cos\nu}
		\left(
		1-\dfrac{1}{\kappa}\dfrac{\kappa}{d^3}
		\right)(1+x_1)=0,\qquad
		\dfrac{1}{1+\epsilon\cos\nu}
		\left(
		1-\dfrac{1}{\kappa}\dfrac{\kappa}{d^3}
		\right)x_2=0.
	\end{eqnarray*}
	
	Note that $1+x_1$ and $x_2$ cannot be simultaneously zero, as it would lead to a singularity for $W$. Therefore, by both of the above equations, we have $d^3=1$, i.e.,
	\begin{equation}\label{cir-eq}
		(1+x_1)^2+x_2^2=1, \;\text{for all} \; \nu\in\mathbb{R},\; \mu\in I \text{ and } \epsilon\in[0,1).
	\end{equation}
	Now, because \eqref{eq1perturb}, we have $y_1=-x_2$, $y_2=1+x_1$. Finally, we denote these equilibria as in the statement. 
\end{proof}
\begin{remark}
	Previously, we observed that the equilibrium $\overline{v}=0$, $\overline{w}=0$ of the system \eqref{sisperturbkepler} gives rise to Euler's collinear solution we generate the isosceles solutions. Geometrically, because of rotation and homothety to each point taken on the equilibria circumference \eqref{cir-eq}, we generate an ellipse whose focus is common to the center $(-1,0)$ of this circumference, and whose major axis is defined as $\nu$ varies. See this interpretation in Figure \ref{fig:circeq} below. Thus, each equilibria of Hamiltonian system \eqref{hinicial} situated on circumference \eqref{cir-eq} corresponds to an elliptical collinear solution. Note that the equilibria on dynamical of isosceles movements are not isosceles solutions.
\end{remark}

\begin{figure}[!htb]
	\centering
	\includegraphics[scale=.4]{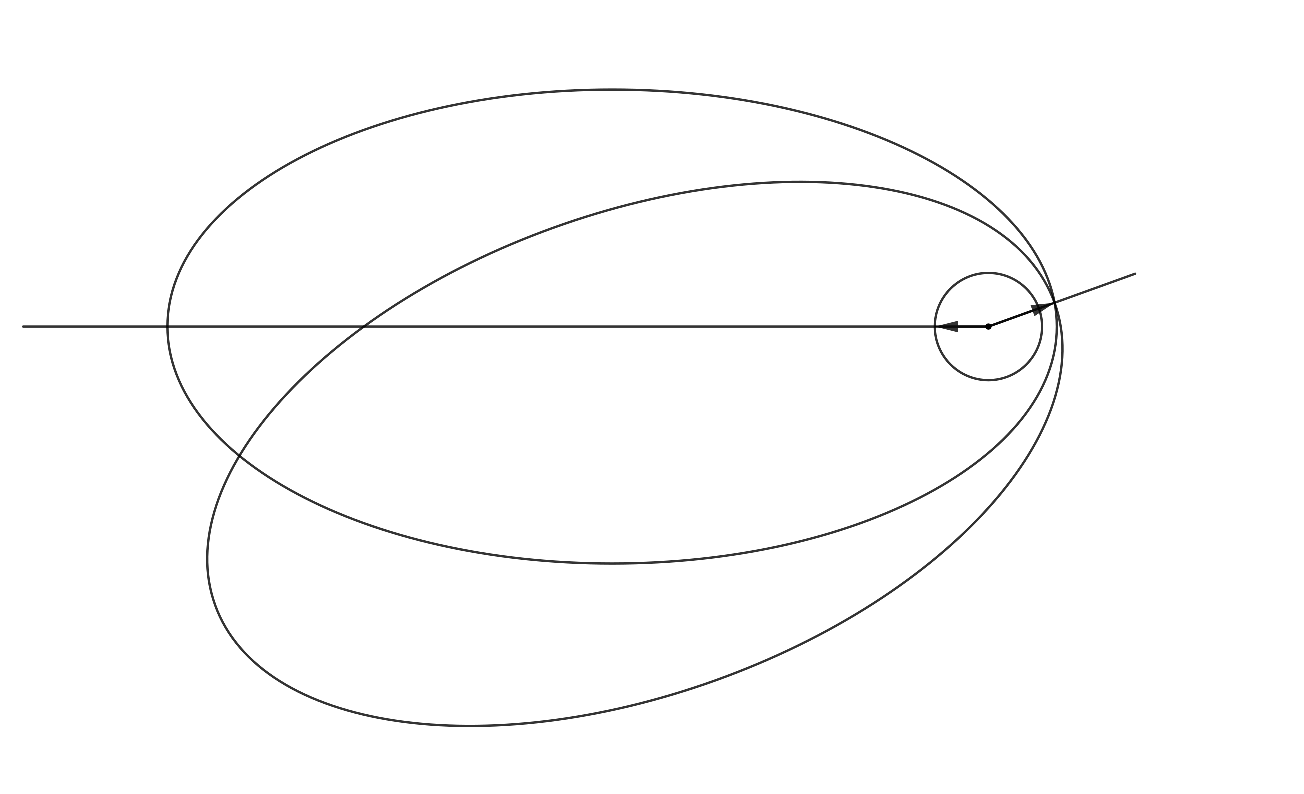}
	\caption{\textmd{Geometric interpretation of equilibria circumference.}}
	\label{fig:circeq}
\end{figure}

\section{Characteristic equation of Hamiltonian system}

Setting $z=(x,y)$, the linearized system at an equilibrium point can be write in the form $z' = A(\nu, \mu, \epsilon)z$, where the Hamiltonian matrix is given by
$$
A(\nu, \mu, \epsilon) = JH_{zz}(z^*, \nu, \mu, \epsilon) =
\begin{bmatrix}
	-\Sigma        & I \\
	V_{xx}^* & -\Sigma
\end{bmatrix},
$$
where $J=\begin{bmatrix}
	O & I\\
	-I & O
\end{bmatrix}$,
$I$ is the identity matrix of order three, $V_{xx}^* = V_{ xx}(x^*, \nu, \mu, \epsilon) = \dfrac{1}{1+\epsilon\cos\nu}\left(-\epsilon\cos\nu I+\dfrac{1}{\kappa}W_{ x x}^*(x,\mu)\right)$, and $W_{xx}^* = W_{xx}(x^*, \mu)$ denotes the Hessian matrix of $W$ at equilibrium. When $\epsilon=0$, we have an autonomous system represented by the Hamiltonian matrix $A=A(\mu)$, where $V_{ xx}^*=\dfrac{1}{\kappa}W_{ x x}^*$. The entries of this matrix, can be obtained using \eqref{parciasW}, then
\begin{eqnarray*}
	W_{x_1x_1}&=&3B_{1}(x,\mu)(1+x_1)^2-B(x,\mu)
	\qquad
	W_{x_1x_2}=3B_{1}(x,\mu)(1+x_1)x_2
	\\
	W_{x_2x_2}&=&3B_{1}(x,\mu)x_2^2-B(x,\mu)
	\qquad \qquad \quad
	W_{x_3x_3}=\dfrac{(2\mu+1)}{d_{2}^3}\left(\dfrac{3(2\mu+1)}{d_{2}^2}x_3^2-1\right),
\end{eqnarray*}
where $B_{1}(x,\mu)=\left(\dfrac{\mu}{4d_{1}^5}+\dfrac{1}{d_{2}^5}\right)$.
When $x_3^*=0$, and \eqref{cir-eq} is satisfied, the entries of the matrix $W_{xx}^*$ are as follows:
\begin{eqnarray*}
	W_{x_1x_1}^*=a&=&\kappa\left[3(1+{x_1^*})^2-1\right],\qquad
	W_{x_1x_2}^*=b=\kappa\left[3(1+x_1^*)x_2^*\right],\\
	W_{x_2x_2}^*=c&=&\kappa\left[3{x_2^*}^2-1\right],\hspace{1.8cm}
	W_{x_3x_3}^*=d=-(2\mu+1),
\end{eqnarray*}
with the remaining entries being zero.
The characteristic polynomial of $A$ is given by
\begin{eqnarray}
	p(\lambda)&=&
	\left(\frac{d}{\kappa}-\lambda^2\right)\left[\lambda^4+\left(2-\frac{a+c}{\kappa}\right)\lambda^2+\left(\frac{a}{\kappa}+1\right)\left(\frac{c}{\kappa}+1\right)-\frac{b^2}{\kappa^2}\right].\nonumber
\end{eqnarray}
Now, as $(x_1^*,x_2^*)$ belongs to the equilibria circumference, it follows that
\begin{eqnarray*}
	2-\frac{a+c}{\kappa}=1\qquad
	\left(\frac{a}{\kappa}+1\right)\left(\frac{c}{\kappa}+1\right)-\frac{b^2}{\kappa^2}=0.
\end{eqnarray*}
Therefore, replacing the value of $d$, the characteristic polynomial is given by:
\begin{equation*}
	p(\lambda) =-\lambda^2\left(\dfrac{2\mu+1}{\kappa}+\lambda^2\right)(\lambda^2+1).
\end{equation*}

Thus, we have a double-zero eigenvalue and two distinct purely imaginary eigenvalues when $\mu \neq 0$.

Note that the linearized Hamiltonian at the equilibrium solution when $\epsilon=0$ is given by
\begin{eqnarray}\label{Hleq}
	H_0(x,y,\mu)&=&\dfrac{1}{2}\left[
	\Vert y \Vert^2-\dfrac{1}{\kappa}\langle x,W_{xx}^*x\rangle-2\langle \Sigma x, y\rangle \right],
	\quad \kappa=\dfrac{\mu+4}{4}.
\end{eqnarray}

\section{Null eigenvalue elimination and normal form of the reduced Hamiltonian}\label{sec6}
As we have seen, the Hamiltonian matrix $A$ has a zero eigenvalue with a multiplicity equal to two. This occurs because the function
$
\mathcal{Q}(x,y)=-\langle \Sigma x,y\rangle=x_2y_1-x_1y_2,
$
is a first integral of the system \eqref{hinicial} as is proven in the following result.
\vspace{.3cm}

\begin{proposition}\label{prop-reduction}
	Let $x_0$ be an equilibrium of $\dot{x}=f(x)$, $x\in\mathbb{R}^n$, and $\Psi$ be a first integral of this system defined in a neighborhood of $x_0$ such that $\nabla \Psi(x_0)\neq 0$. Then $A=Df(x_0)$ has a zero eigenvalue.
\end{proposition}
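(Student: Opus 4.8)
The plan is to exploit the defining property of a first integral: along every solution $\Psi$ stays constant, which in infinitesimal form means that the Lie-derivative identity $\langle \nabla \Psi(x), f(x)\rangle = 0$ holds for every $x$ in the neighborhood on which $\Psi$ is defined. The idea is to differentiate this identity with respect to $x$ and then evaluate at the equilibrium $x_0$: since $f(x_0)=0$, one of the two resulting terms vanishes, and the surviving term is a purely linear-algebraic relation that forces $\det A = 0$.

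First I would record the scalar identity $g(x) := \langle \nabla \Psi(x), f(x)\rangle \equiv 0$ on the neighborhood, so that all of its first-order partial derivatives vanish there as well. Differentiating with respect to $x_j$ and applying the product rule gives, for each $j$,
\[
\partial_{x_j} g(x) = \langle \nabla \Psi(x),\, \partial_{x_j} f(x)\rangle + \langle D^2\Psi(x)\, f(x),\, e_j\rangle = 0,
\]
where $D^2\Psi$ denotes the Hessian of $\Psi$ and $e_j$ the $j$-th canonical vector of $\mathbb{R}^n$.

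Next I would evaluate this at $x_0$. Because $x_0$ is an equilibrium, $f(x_0)=0$, so the second (Hessian) term drops out entirely, leaving $\langle \nabla\Psi(x_0),\, \partial_{x_j} f(x_0)\rangle = 0$ for every $j$. Since $\partial_{x_j} f(x_0)$ is precisely the $j$-th column of $Df(x_0)=A$, collecting these $n$ scalar equations into one vector identity yields $\nabla \Psi(x_0)^{T} A = 0$, equivalently $A^{T}\nabla\Psi(x_0)=0$. Thus $\nabla\Psi(x_0)$ is a vector in the kernel of $A^{T}$, and it is nonzero by hypothesis.

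Finally I would conclude: a nontrivial kernel of $A^{T}$ means $\det A^{T}=\det A = 0$, so $0$ is an eigenvalue of $A$ (indeed $\nabla\Psi(x_0)$ is a left eigenvector of $A$ with eigenvalue zero). I do not expect a genuine obstacle in this argument, since it is a short differentiation followed by elementary linear algebra; the only points that merit a little care are reading ``first integral'' as the infinitesimal invariance $\langle \nabla\Psi, f\rangle\equiv 0$, and noting that the computation delivers a \emph{left} null vector of $A$, so the passage to ``$A$ has a zero eigenvalue'' is made through the identity $\det A = \det A^{T}$ rather than by exhibiting a right eigenvector directly.
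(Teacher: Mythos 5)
Your proof is correct, and it is essentially the canonical argument: differentiate the infinitesimal invariance identity $\langle \nabla\Psi(x), f(x)\rangle \equiv 0$, kill the Hessian term at the equilibrium using $f(x_0)=0$, and conclude that $A^{T}\nabla\Psi(x_0)=0$, so the nonzero vector $\nabla\Psi(x_0)$ is a left null vector of $A$ and $\det A = \det A^{T} = 0$. There is nothing substantive to compare against here, because the paper's entire proof of this proposition is the two words ``By contradiction''; your write-up supplies exactly the content that the paper omits. The contradiction phrasing the author presumably had in mind is a trivial repackaging of your computation: assume $A$ is invertible, so $A^{T}$ is too, which is incompatible with $A^{T}\nabla\Psi(x_0)=0$ and $\nabla\Psi(x_0)\neq 0$. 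Two minor polish points: (i) your product-rule step implicitly uses that $\Psi$ is twice differentiable (harmless in this paper, where all data are analytic), though you can avoid the Hessian altogether by expanding $\langle \nabla\Psi(x_0+h), f(x_0+h)\rangle = \langle \nabla\Psi(x_0)+o(1),\, Ah + o(\|h\|)\rangle$ and letting $h\to 0$ along rays; (ii) your closing remark about left versus right eigenvectors is exactly the right care to take, since the computation produces a kernel vector of $A^{T}$, not of $A$.
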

\begin{proof}
	By contradiction.
\end{proof}
The following result can be seen in \cite[Section~18]{siegel}.

\vspace{.3cm}
\begin{proposition}\label{prop-reduce-dof}
	Consider an autonomous Hamiltonian system $\dot{z}=J\nabla H(z)$, where $\Psi$ is a time-independent first integral. Then, in an open set where $\nabla\Psi(z)\neq0$, it is possible to decrease the number of degrees of freedom of the system by one unit.
\end{proposition}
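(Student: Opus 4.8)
The plan is to reduce the statement to the classical fact that a conserved momentum makes its conjugate coordinate cyclic. Write $z\in\mathbb{R}^{2n}$ so that the system has $n$ degrees of freedom, and recall that $\Psi$ being a first integral means $\{H,\Psi\}=\nabla H^{T}J\nabla\Psi=0$, equivalently that $\Psi$ is constant along the flow of $X_H=J\nabla H$ and that $H$ is constant along the flow of $X_\Psi=J\nabla\Psi$. Fix a point $z_0$ in the open set where $\nabla\Psi(z_0)\neq0$; the whole argument is local near $z_0$.

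First I would build a canonical change of variables adapted to $\Psi$. The goal is a symplectic chart $z\mapsto(Q,P)=(Q_1,\dots,Q_n,P_1,\dots,P_n)$ near $z_0$ in which $P_n=\Psi$. Since $\nabla\Psi(z_0)\neq0$, the Hamiltonian vector field $X_\Psi$ does not vanish at $z_0$, so by the straightening theorem for vector fields one may solve the linear first-order PDE $\{Q_n,\Psi\}=1$ for a function $Q_n$ by the method of characteristics along the flow of $X_\Psi$. This produces a pair $(Q_n,\,P_n=\Psi)$ with $\{Q_n,P_n\}=1$.

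The technical heart — and the step I expect to be the main obstacle — is to extend $(Q_n,\Psi)$ to a full canonical coordinate system, that is, to choose the remaining functions $Q_1,\dots,Q_{n-1},P_1,\dots,P_{n-1}$ so that all the canonical Poisson relations $\{Q_i,Q_j\}=\{P_i,P_j\}=0$ and $\{Q_i,P_j\}=\delta_{ij}$ hold, and in particular so that $Q_1,\dots,Q_{n-1},P_1,\dots,P_{n-1}$ Poisson-commute with both $\Psi$ and $Q_n$. This is precisely the Darboux-type construction carried out in Siegel's Section~18; it proceeds by induction, at each stage straightening an additional non-vanishing Hamiltonian vector field on the successive level sets, where the commuting relations already established guarantee that the flows involved commute, which is what lets the induction close.

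Once such a canonical chart exists, the conclusion is immediate. Because the change of variables is symplectic, Hamilton's equations keep their form with a transformed Hamiltonian $\tilde H(Q,P)$, and $P_n=\Psi$ remains a first integral. Hence $\dot P_n=-\partial\tilde H/\partial Q_n\equiv0$, so $\tilde H$ does not depend on $Q_n$: the coordinate $Q_n$ is cyclic. Fixing the conserved value $P_n=c$ of the integral, the equations for $(Q_1,\dots,Q_{n-1},P_1,\dots,P_{n-1})$ decouple from $Q_n$ and form a closed Hamiltonian system with $n-1$ degrees of freedom governed by $\tilde H(Q_1,\dots,Q_{n-1},P_1,\dots,P_{n-1};c)$, which is the desired reduction; the ignorable variable is afterwards recovered by the quadrature $\dot Q_n=\partial\tilde H/\partial P_n$.
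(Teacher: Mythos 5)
Your proposal is correct and takes essentially the same route as the paper: the paper gives no proof of this proposition at all, deferring it entirely to \cite[Section~18]{siegel}, and your sketch (build a canonical chart with $P_n=\Psi$, observe that the conjugate variable $Q_n$ is cyclic because $\dot P_n=-\partial\tilde H/\partial Q_n\equiv 0$, fix $P_n=c$ to obtain a closed system with $n-1$ degrees of freedom, and recover $Q_n$ by quadrature) is precisely the classical argument of that reference. Like the paper, you leave the one genuinely technical step---extending $(Q_n,\Psi)$ to a full canonical coordinate system---to Siegel's inductive construction, so your write-up is at least as complete as the paper's own treatment.
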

\vspace{.3cm}

\begin{proposition}\label{dfr}
	There exists a symplectic transformation such that the Hamiltonian in \eqref{Hleq}, with three degrees of freedom, is taken to the following non-autonomous Hamiltonian with two degrees of freedom
	\begin{equation}\label{hamil-reduzido}
		\mathcal{H}(u_1,u_3,v_1,v_3,\nu,\mu,\gamma,\epsilon)
		=
		\frac{1}{2}\left(v_1^2+v_3^2
		\right)
		+\mathcal{V}(u_1,u_3,v_1,v_3,\nu,\mu,\gamma,\epsilon),
	\end{equation}
	with $u_1\neq0$ and the potential \eqref{potencialgeral} now given by
	$$
	\mathcal{V}(u_1,u_3,\nu,\mu,\gamma,\epsilon)=
	\frac{\gamma^2}{2u_1^2}
	+\frac{1}{1+\epsilon\cos\nu}\left(
	\frac{1}{2} \epsilon\cos\nu(u_1^2+u_3^2)-\frac{1}{\kappa}\mathcal{W}(u_1,u_3,\mu)
	\right),
	$$
	where 
	$$
	\mathcal{W}(u_1,u_3,\mu)=\left(
	\frac{\mu}{4|u_1|}+\frac{1}{\sqrt{u_1^2+(2\mu+1)u_3^2}}
	\right).
	$$
	
\end{proposition}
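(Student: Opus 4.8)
The plan is to carry out the symplectic reduction promised by Proposition \ref{prop-reduce-dof}, using the rotational first integral responsible for the double--zero eigenvalue of the characteristic polynomial. The first task is to pin down the exact conserved quantity. The naive candidate $\mathcal{Q}=-\langle\Sigma x,y\rangle$ rotates the configuration about the origin, whereas the potential \eqref{potencialgeral} is organized around the centre $(-1,0)$ of the equilibria circle \eqref{cir-eq}, so $\mathcal{Q}$ alone is not preserved. The quantity that actually does the job is
\[
G(x,y)=(1+x_1)y_2-x_2y_1+x_1,
\]
the angular momentum about $(-1,0)$ corrected by the linear term $x_1$. To see it is conserved I would first complete the square, writing the Hamiltonian \eqref{hinicial} as $H=H_A-(1+\epsilon\cos\nu)^{-1}H_B$, with kinetic--gyroscopic part $H_A=\tfrac12\|y-\Sigma x\|^2+\tfrac12 x_3^2$ and configuration part $H_B=\tfrac12\|x\|^2+\kappa^{-1}W+x_1$. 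A short Poisson--bracket computation then gives $\{H_A,G\}=\{H_B,G\}=0$ separately, so $\{H,G\}=0$ for every $\nu,\mu,\epsilon$; since $\nabla G\neq0$ along \eqref{cir-eq}, Propositions \ref{prop-reduction} and \ref{prop-reduce-dof} apply.

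Next I would realize $G$ as a momentum by a linear symplectic change. The flow of $G$ rotates $x$ about $(-1,0)$ and, simultaneously, $y$ about $(0,-1)$, which suggests the symplectic translation $\xi=(x_1+1,x_2,x_3)$, $\eta=(y_1,y_2+1,y_3)$. In these variables $G=\xi_1\eta_2-\xi_2\eta_1-1$ is, up to the constant $-1$, the standard planar angular momentum, and---this is the crucial cancellation---all symmetry--breaking linear terms disappear: expanding $\tfrac12\|y\|^2$, $-\langle\Sigma x,y\rangle$ and $-V$ in $(\xi,\eta)$, the contributions $\pm\eta_2$ and $\pm\xi_1$ produced by the shift and by the gyroscopic term cancel against the linear potential term $x_1$, after using $1-(1+\epsilon\cos\nu)^{-1}=\epsilon\cos\nu\,(1+\epsilon\cos\nu)^{-1}$. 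What remains is rotationally invariant in the plane,
\[
H=\tfrac12\|\eta\|^2-(\xi_1\eta_2-\xi_2\eta_1)+\frac{\epsilon\cos\nu}{2(1+\epsilon\cos\nu)}\|\xi\|^2-\frac{1}{\kappa(1+\epsilon\cos\nu)}W+\mathrm{const}(\nu).
\]

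I would then pass to symplectic polar coordinates in the plane, $\xi_1=u_1\cos\phi$, $\xi_2=u_1\sin\phi$, with conjugate radial momentum $v_1$ and angular momentum $p_\phi$, leaving $(\xi_3,\eta_3)=(u_3,v_3)$ untouched. The substitution yields $\tfrac12\|\eta\|^2=\tfrac12(v_1^2+v_3^2)+p_\phi^2/(2u_1^2)$, turns the distances into $d_1=u_1$ and $d_2=\sqrt{u_1^2+(2\mu+1)u_3^2}$ so that $W$ becomes $\mathcal{W}(u_1,u_3,\mu)$, and gives $\|\xi\|^2=u_1^2+u_3^2$. Since $H$ no longer contains $\phi$, the momentum $p_\phi$ (equal to $G+1$) is constant; denoting it $\gamma$, freezing it, discarding the cyclic $\phi$, and dropping the additive constant $-\gamma$ together with the $\nu$--constants, one is left with exactly the two--degree--of--freedom Hamiltonian \eqref{hamil-reduzido}, with centrifugal term $\gamma^2/(2u_1^2)$ and potential $\mathcal{V}$ as stated; the polar chart forces $u_1\neq0$.

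The step I expect to be the main obstacle is the identification of $G$ together with the verification that the translation $(x_1,y_2)\mapsto(x_1+1,y_2+1)$ removes every linear term. The difficulty is that the gyroscopic term $-\langle\Sigma x,y\rangle$ breaks the rotational symmetry of the potential about $(-1,0)$; only after correcting the angular momentum by $x_1$ and, correspondingly, shifting $y_2$ by one does the Hamiltonian become invariant, the point being that under the flow of $G$ the pair $(y_1+x_2,\,y_2-x_1)$ rotates rigidly, so that $H_A$, which depends only on its norm and on $x_3$, is preserved. Checking that the $\xi_1$-- and $\eta_2$--linear terms cancel identically---rather than leaving a residual $\sin\phi,\cos\phi$ dependence that would spoil the cyclicity of $\phi$---is the delicate bookkeeping on which the whole reduction rests; once it closes, the remaining substitutions are routine.
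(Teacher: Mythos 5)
Your proposal is correct, and it is not just a stylistic variant of the paper's argument: it supplies an affine correction without which the paper's own proof does not close. The paper reduces using $\mathcal{Q}=-\langle \Sigma x,y\rangle$ and the origin-centered polar chart $x_1=u_1\cos u_2$, $x_2=-u_1\sin u_2$ (lifted by the generating function $\mathcal{F}$). But $\mathcal{Q}$ is not conserved by \eqref{hinicial}: a direct computation with \eqref{pulsante} gives $\mathcal{Q}'=x_2V_{x_1}-x_1V_{x_2}=\dfrac{x_2}{1+\epsilon\cos\nu}\left(1-\dfrac{B(x,\mu)}{\kappa}\right)$, which vanishes only on the equilibrium circle \eqref{cir-eq} (where $B=\kappa$) or on $x_2=0$. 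Correspondingly, in the paper's chart the angle $u_2$ is not cyclic, since $W$ and the term $x_1$ in \eqref{potencialgeral} keep their $u_2$-dependence (indeed $d_1=\sqrt{1+2u_1\cos u_2+u_1^2}$ there, not the $|u_1|$ appearing in $\mathcal{W}$), so $v_2$ is not a constant of motion as claimed. Consistency with the stated $\mathcal{V}$ (singularity at $u_1=0$, equilibrium at $u_1=1$) forces the polar chart to be centered at $(-1,0)$, and eliminating the linear terms generated by the gyroscopic term then forces the accompanying momentum shift --- precisely your translation $(\xi,\eta)=(x+e_1,y+e_2)$ and your corrected integral $G=(1+x_1)y_2-x_2y_1+x_1$, which \emph{is} conserved: $G'=(1+x_1)V_{x_2}-x_2V_{x_1}+x_2=0$ by the rotational invariance of $W$ about $(-1,0)$. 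After your translation the linear terms cancel exactly as you assert, the Hamiltonian becomes rotation-invariant, and symplectic polar coordinates yield \eqref{hamil-reduzido} verbatim with $p_\phi=G+1=\gamma$; this also explains why the equilibrium of the reduced system sits at $u_1=1$ when $\gamma=1$. So both proofs pursue the same strategy --- make the rotation angle cyclic and freeze its conjugate momentum, as in Proposition \ref{prop-reduce-dof} --- but yours identifies the correct first integral and the correct center of rotation, which is exactly the step the paper elides. (One shared reading convention: the statement's reference to \eqref{Hleq} must be understood as the full Hamiltonian \eqref{hinicial}, since \eqref{hamil-reduzido} retains the full potential; both you and the paper in fact reduce \eqref{hinicial}.)
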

\begin{proof}
	Because of the Proposition $\ref{prop-reduction}$ and the process described in proof of Proposition $\ref{prop-reduce-dof}$ we can get the  following symplectic transformation given by
	\begin{equation*}
		\begin{array}{lcr}
			x_1=u_1\cos u_2 \quad & \quad x_2=-u_1\sin u_2 & \quad x_3=u_3,\\
			y_1=v_1\cos u_2-\dfrac{v_2}{u_1}\sin u_2 \quad & \quad 	y_2=-v_1\sin u_2-\dfrac{v_2}{u_1}\cos u_2 & \quad	y_3=v_3,
		\end{array}
	\end{equation*}
	generated by 
	$
	\mathcal{F}(u,y)=\left(u_1\cos u_2\right)y_1+\left(-u_1\sin u_2\right)y_2+u_3y_3,
	$ since $u_1\neq0$.
	
	\noindent Due to above mentioned transformation, now $\mathcal{Q}=v_2$ is a constant of motion, say $v_2=\gamma$. That is, the term $\mathcal{Q}$ in \eqref{Hleq} can be ignored and so we obtain the reduced Hamiltonian as \eqref{hamil-reduzido}.
	
\end{proof}

If $\gamma\neq1$, then \eqref{hamil-reduzido} there is no equilibria. If $\gamma=1$, its only equilibrium is the point
$$
P^{*}: u_1=1, \quad u_3=0, \quad v_1=0, \quad v_3=0.
$$

In order to study the stability of the equilibrium point $P^*$ we change to new coordinates $(\xi_1,\xi_2,\eta_1,\eta_2)$ with its origin at the equilibrium point via $$u_1=\xi_1+1,\qquad 
u_3=\xi_2, \qquad 
v_1=\eta_1, \qquad 
v_3=\eta_2.$$
Then, the Hamiltonian \eqref{hamil-reduzido} becomes
\begin{equation*}
	H(\xi_1,\xi_2,\eta_1,\eta_2,\nu,\mu,\gamma,\epsilon)=\dfrac{1}{2}(\eta_1^2+\eta_2^2)+S(\xi_1,\xi_2,\eta_1,\eta_2,\mu,\gamma,\epsilon),
\end{equation*}
where $S(\xi_1,\xi_2,\eta_1,\eta_2,\nu,\mu,\gamma,\epsilon)=\mathcal{V}(\xi_1+1,\xi_2,\eta_1,\eta_2,\mu,\gamma,\epsilon)$.
Thus, the expansion of \eqref{h-red-linearizado} at the origin is given by
\begin{equation}\label{H eq exp}
	H(\zeta,\nu,\mu,\epsilon)=H_0(\zeta,\nu,\mu,\epsilon)+H_1(\zeta,\nu,\mu,\epsilon)+H_2(\zeta,\nu,\mu,\epsilon)+\ldots,
\end{equation}
where each $H_k$, $k=0,1,2,\ldots$ it is a homogeneous polynomial of degrees $k+2$ in the coordinates of $\zeta=(\xi_1,\xi_2,\eta_1,\eta_2)$ and is $2\pi$ periodic in $\nu$ besides this terms containing the parameters $\mu$ and $\epsilon$ and the quadratic term is given by
\begin{eqnarray}\label{h-red-linearizado}
	H_0(\xi_1,\xi_2,\eta_1,\eta_2,\mu,\epsilon)
	&=&\frac{1}{2}\left(\eta_1^2+\eta_2^2\right)+\frac{3}{2}\xi_1^2\nonumber\\
	&-&\frac{1}{1+\epsilon\cos\nu}\left[
	-\frac{\epsilon\cos\nu}{2}(\xi_1^2+\xi_2^2)+\xi_1^2-\frac{(2\mu+1)}{2\kappa}\xi_2^2
	\right].
\end{eqnarray}

\begin{remark}
	By reducing the dynamics to the space obtained when the value of the first integral is fixed, which gave rise to the null eigenvalue, we obtain a simplified system from which the null eigenvalue has been eliminated.
	
\end{remark}

\section{Parametric stability}

In the previous section, although \eqref{hamil-reduzido} has been reduced to another periodic Hamiltonian with two degrees of freedom, it is still time dependent and, as was commented previously, the study of equilibrium stability is not trivial. However, there is possible to study the stability of the equilibrium for the linearized Hamiltonian system in a neighborhood of it. The parametric stability study of the linearized Hamiltonian system can be facilitated by the normal form of the unperturbed linearized Hamiltonian system \eqref{h-red-linearizado}, when $\epsilon=0$, as follows.

Proceeding as \cite[Section 1.2]{markeev} for $\epsilon=0$, the following symplectic linear transformation 
\begin{equation*}
	\xi_1=\omega_1^{-\frac{1}{2}}y_1,\qquad
	\xi_2=\omega_2^{-\frac{1}{2}}y_2,\qquad
	\eta_1=-{\omega_1}^{\frac{1}{2}}x_1,\qquad
	\eta_2=-{\omega_2}^{\frac{1}{2}}x_2,
\end{equation*}
where $\omega_1=1$, $\omega_2=\sqrt{\dfrac{2\mu+1}{\kappa}}$ and $\kappa=\dfrac{\mu+4}{4}$, brings \eqref{H eq exp} to following normalized Hamiltonian
\begin{equation}\label{hexpandidoeps}
	H(x,y,\nu,\mu,\epsilon)=H_0+\epsilon H_1+\dfrac{\epsilon^2}{2!}H_2+\ldots+\dfrac{\epsilon^k}{k!}H_k+\mathcal{O}(\epsilon^{k+1}),
\end{equation}
where
\begin{eqnarray*}
	H_0&=&\dfrac{\omega_1}{2}(x_1^2+y_1^2)+\dfrac{\omega_2}{2}(x_2^2+y_2^2) \\
	H_k&=&\dfrac{(-1)^{k+1}}{2}\left[3 y_1^2- \dfrac{7}{2}\dfrac{\mu}{\mu+4}\dfrac{1}{\omega_2}  y_2^2\right] \cos^k (\nu ),\quad k=1,2,\cdots .
\end{eqnarray*}
For the unperturbed system, which in this case is also autonomous, because the quadratic form $H_0$ is positive definite, the equilibrium $(0,0)$ is stable by the Dirichlet theorem.
For $\epsilon\neq 0$ the system it is time dependent and the question of stability of an equilibrium is not trivial. When the linear system $z' = A(\nu, \mu^*, 0)z$ has a multiple multiplier for some value $\mu^* \in I$, we say that this is the value of \textit{parametric resonance}. According to the Krein-Gelfand-Lidskii (KGL) theorem enunciated below, the unperturbed system for this resonance value can be strongly stable for small values of $\epsilon$, and consequently parametically stable. In this last case, we have that for $(\mu, \epsilon)$ in the neighborhood of $(\mu^*, 0)$, the perturbed system $z' = A(\nu, \mu, \epsilon)z$ is stable for small $\epsilon$. Now, let us assume that the unperturbed system is not strongly stable. That is, in the neighborhood of this system, there are both stable and unstable systems. Furthermore, in the family of parametric systems $z' = A(\nu, \mu, \epsilon)z$, we can have parameters that provide stable systems and parameters that provide unstable systems. Thus, perhaps the parameters can be separated by continuous curves that limit, in the parameter plane $(\mu,\epsilon)$, the regions of the family of systems that are stable from those that are unstable. We then look for the resonances of this system according to the KGL theorem enunciated below that can be consulted in \cite[Section 7.7]{hl}, \cite[Section~3.1]{markeev} and \cite{yaku}, for this specific case of resonance see \cite{mark3}.

Consider a Hamiltonian function of the form $H=H_0+\epsilon H_1+\epsilon^2 H_2+\mathcal{O}(\epsilon^3)$, where $H_1$, $H_2,\ldots$ are quadratic forms in the variables $x_1,x_2,\ldots,x_n$, $y_1,y_2,\ldots,y_n$ with continuous and $2\pi$-periodic coefficients in $t$ and $H_0=\dfrac{1}{2}\displaystyle\sum_{k=1}^{n}\sigma_k(x_k^2+y_k^2)$, where $\sigma_k=\delta_k\omega_k$, with $\delta_k=-1$ or  $\delta_k=1$.
\begin{theorem}
	(Krein-Gelfand-Lidskii: KGL) For sufficiently small $\epsilon$, the linear system with the above Hamiltonian is strongly stable if and only if the values of $\sigma_j$ are not related by equalities of the form $\sigma_k+\sigma_l=N$, where $k,\;l$ are non-negative integers and $N\in\mathbb{Z}$.
\end{theorem}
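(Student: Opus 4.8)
The plan is to translate the question into the geometry of the symplectic monodromy matrix and then apply Krein's theory of parametric resonance. Write the linearized system as $z'=A(\nu,\mu,\epsilon)z$ with $A=JH_{zz}$ being $2\pi$-periodic in $\nu$, and let $M=M(\epsilon)$ be its monodromy matrix, the value at $\nu=2\pi$ of the fundamental matrix normalized to the identity at $\nu=0$. Since the system is Hamiltonian, $M$ is symplectic, $M^{\top}JM=J$, so its eigenvalues (the multipliers) are invariant under $\rho\mapsto\bar\rho$ and $\rho\mapsto\rho^{-1}$. By Floquet--Lyapunov theory the system is stable exactly when every multiplier lies on the unit circle and is semisimple, and it is \emph{strongly} stable when in addition every symplectic matrix near $M$ has the same property; thus strong stability is the assertion that $M$ lies in the interior of the stability set of $\mathrm{Sp}(2n,\mathbb{R})$. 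Because we only need the result for small $\epsilon$ and $M$ depends continuously on $\epsilon$, it suffices to decide whether the unperturbed monodromy $M(0)$ is an interior point.

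First I would compute $M(0)$. For $\epsilon=0$ the Hamiltonian is the autonomous sum of oscillators $H_0=\tfrac12\sum_k\sigma_k(x_k^2+y_k^2)$, so $M(0)=\exp(2\pi J H_0'')$ is block diagonal, each block a planar rotation, and its multipliers are $e^{\pm2\pi i\sigma_k}$: all on the unit circle and semisimple, so $M(0)$ is stable. To test \emph{strong} stability I would invoke Krein's indefinite Hermitian form $G(\zeta)=\tfrac{1}{i}\,\bar\zeta^{\top}J\zeta$, which is preserved by $\mathrm{Sp}(2n,\mathbb{R})$ and is nonzero on each unit-circle eigenspace; the sign of $G$ there is the \emph{Krein signature} of the multiplier. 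A one-line computation on a single oscillator block, using the eigenvectors $(1,\pm i)$, shows that $e^{+2\pi i\sigma_k}$ carries signature $+1$ while its conjugate $e^{-2\pi i\sigma_k}$ carries signature $-1$, independently of the sign of $\sigma_k$. The decisive input is then Krein's lemma: a multiplier of definite signature cannot leave the unit circle under a small symplectic perturbation, and a pair of colliding multipliers can split off the circle only when they carry \emph{opposite} signatures.

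With this dichotomy the theorem follows by a signature count. Enumerating the coincidences among the numbers $e^{\pm2\pi i\sigma_k}$, a collision of two signature-$(+1)$ (or two signature-$(-1)$) multipliers is of the form $e^{2\pi i\sigma_k}=e^{2\pi i\sigma_l}$ and forces the harmless difference relation $\sigma_k-\sigma_l\in\mathbb{Z}$, whereas a collision of opposite signatures is necessarily of the form $e^{2\pi i\sigma_k}=e^{-2\pi i\sigma_l}$ and forces exactly the sum relation $\sigma_k+\sigma_l=N$, with the diagonal case $k=l$ giving $2\sigma_k=N$ (a multiplier landing at $\pm1$). Hence, if no relation $\sigma_k+\sigma_l=N$ holds, every coincidence among the multipliers is between equal signatures, the form $G$ is definite on each eigenspace of $M(0)$, and by Krein's lemma all nearby symplectic matrices keep their multipliers on the circle: $M(0)$ is strongly stable. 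Conversely, if some $\sigma_k+\sigma_l=N$ holds, two opposite-signature multipliers coincide and I would produce an arbitrarily small $2\pi$-periodic Hamiltonian perturbation---a resonant term oscillating like $\cos(N\nu)$ coupling the $k$-th and $l$-th modes---whose effect on the monodromy drives this pair transversally off the unit circle, creating unstable systems arbitrarily close to $M(0)$ and hence destroying strong stability.

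The main obstacle I expect lies in the converse implication: the mere coincidence of two opposite-signature multipliers does not by itself force instability, and one must exhibit a genuine destabilizing direction. This reduces to analyzing the local normal form of $M$ at the collision and showing that the off-diagonal resonant coupling contributes a nontrivial nilpotent part, so that the double multiplier splits into a reciprocal pair off the circle to first nonvanishing order in $\epsilon$---the classical Gelfand--Lidskii ``Krein collision'' computation. The degenerate coincidences at $\rho=\pm1$, where the symmetries $\rho\mapsto\bar\rho$ and $\rho\mapsto\rho^{-1}$ merge, require separate bookkeeping of the Jordan structure but are still governed by the same sum-resonance condition $2\sigma_k=N$; handling them, and verifying that the perturbations realizing the instability indeed stay within the admissible class of $2\pi$-periodic Hamiltonians, is where the care is concentrated.
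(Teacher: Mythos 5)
The paper offers no proof of this theorem for you to be compared against: it is quoted as a classical result, with the proof explicitly deferred to the cited sources (Cabral--Dias, Section 7.7; Markeev, Section 3.1; Yakubovich--Starzhinskii). Your outline is, in substance, exactly the argument those references give: reduce to the symplectic monodromy matrix $M$, characterize stability by semisimple unit-circle multipliers, identify strong stability with $M$ lying in the interior of the stable set of $\mathrm{Sp}(2n,\mathbb{R})$, compute the Krein signatures of the unperturbed multipliers, and invoke Krein's collision lemma, so that only opposite-signature coincidences --- which are precisely the relations $\sigma_k+\sigma_l\in\mathbb{Z}$, including the diagonal case $2\sigma_k=N$ --- can eject multipliers from the circle, with an explicit resonant coupling producing the ejection in the converse direction. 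Your signature computation is correct, and it isolates the key point: the signature of $e^{2\pi i\sigma_k}$ is $+1$ \emph{independently} of the sign of $\sigma_k$, which is exactly why the signed quantities $\sigma_k=\delta_k\omega_k$, rather than the frequencies $\omega_k$, enter the resonance condition. Two caveats, both of which you partially flag. First, the step you present as a definition --- strong stability of the periodic system $\Leftrightarrow$ $M$ interior to the stable symplectic matrices --- is really the Gelfand--Lidskii realization lemma: one must show that every symplectic matrix near $M$ arises as the monodromy of a $2\pi$-periodic Hamiltonian system near the given one; your explicit $\cos(N\nu)$ coupling in the converse is the standard device for supplying the only instance of this that the proof actually needs, but as written it is a sketch, not a computation. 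Second, your reading of the quantifiers is the right one and deserves emphasis: ``strongly stable for sufficiently small $\epsilon$'' must mean stability of \emph{all} sufficiently small periodic perturbations of $H_0$, not of one fixed family $H_0+\epsilon H_1+\cdots$; for a fixed family the ``only if'' direction is false, since a resonant unperturbed system can sit inside a family that avoids the instability tongue --- which is precisely the phenomenon the paper goes on to investigate through the boundary curves of the stability/instability regions.
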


Note that $2\omega_1(\mu)=N_1=2$ is an integer for all $\mu>0$ and 
\begin{equation}\label{desig ressonancia}
	2<2\omega_2(\mu)<4\sqrt{2}.
\end{equation}
Then, a basic resonance is obtained since $2\omega_2$ is not an integer. On the other hand, double resonances occur when $2\omega_2=N_2$ is integer in interval defined by \eqref{desig ressonancia}, that is $N_2=3,4,5$, and 
the following parametric resonance values are
\begin{equation}\label{parametric values}
	\mu_3^*=\dfrac{20}{23},
	\qquad
	\mu_4^*=3
	\qquad 
	\mbox{and}
	\qquad
	\mu_5^*=12.
\end{equation}

\subsection{Boundary curves of the stability/instability regions}\label{subsec6.3}

Now, let us construct the curves that delimit the stability and instability regions for the case of double resonance. Such curves are built in the parameter plane $(\epsilon,\mu)$ and can be expressed through the following expansion in $\epsilon$:
\begin{equation}\label{curvamugeral}
	\mu=\mu_0+\mu_1\epsilon+\mu_2\epsilon^2+ \mathcal{O}(\epsilon^3),
\end{equation}
where $\mu_0=\mu_{N_2}^*$ it is parametric value that gives rise to cases of double resonance, with $N_2=3,4,5$.

By inserting \eqref{curvamugeral} into \eqref{hexpandidoeps}, and rearranging in powers of $\epsilon$, we get the expansion
\begin{equation}\label{hmuexp}
	H(\xi,\eta,\nu,\mu,\epsilon)=H_0(\xi,\eta,\nu,\mu_0)+\epsilon H_1(\xi,\eta,\nu,\mu_0,\mu_1)+
	\epsilon^2 H_2(\xi,\eta,\nu,\mu_0,\mu_1,\mu_2)+\cdots.
\end{equation}

In order to apply Deprit–Hori method in Kamel’s formulation, see \cite{gerson, lucia, laud, marke2, markeev, mark3}, it is convenient to eliminate the resonant harmonic oscillator making the double rotation
\begin{equation}\label{rotation}
	\begin{array}{ccl}
		x_1&=&\cos\left(\dfrac{N_1\nu}{2}\right)X_1+\sin\left(\dfrac{N_1\nu}{2}\right)Y_1\\
		
		y_1&=&-        
		\sin\left(\dfrac{N_1\nu}{2}\right)X_1+\cos\left(\dfrac{N_1\nu}{2}\right)Y_1\\
		x_2&=&\cos\left(\dfrac{N_2\nu}{2}\right)X_2+\sin\left(\dfrac{N_2\nu}{2}\right)Y_2\\
		
		y_2&=&-    
		\sin\left(\dfrac{N_2\nu}{2}\right)X_2+\cos\left(\dfrac{N_2\nu}{2}\right)Y_2,
	\end{array}
\end{equation}
where $ N_1=2$ and $N_2=3,4,5$. Then, after applying the rotation given by \eqref{rotation} into \eqref{hmuexp}, we obtain a new Hamiltonian $\mathbf{H}(X,Y)=H(X,Y)-W_\nu$, where due to $W_\nu=-H_0$, the term $\mathbf{H}_0$ is eliminated. The other terms $\mathbf{H}_j$ will be analyzed according to the types of resonances and the coefficients $\mu_1$, $\mu_2,\ldots$ that can be found through the boundary condition of characteristic equation coefficients of the autonomous and $\tau$ periodic Hamiltonian $K$, obtained through the Deprit-Hori's method.

\begin{enumerate}
	\item {\bf A pair of resonance both even: $2\omega_1(\mu_4^*)=N_1=2$ and $2\omega_2(\mu_4^*)=N_2=4$}.
	
	For $\mu_4^*=3$ we obtain a pair of resonance $2\omega_1(\mu_4^*)=N_1=2$ and $2\omega_2(\mu_4^*)=N_2=4$. In this case, the Hamiltonian of \eqref{hmuexp} is given by
	
	\begin{eqnarray*}
		H_0&=&\frac{1}{4} \left(2(\xi_1^2+\eta_1^2)+4( \xi_2^2+ \eta_2^2)\right)\\
		H_1&=&\frac{1}{28}  \left(2 \mu_1 \left(\xi_2^2+\eta_2^2\right)+21 \cos (\nu ) \left(2 \eta_1^2-\eta_2^2\right)\right)\\
		H_2&=&-\frac{1}{392} \left(\left(5 \mu_1^2-28 \mu_2\right) \left(\xi_2^2+\eta_2^2\right)+294 \cos ^2(\nu ) \left(2 \eta_1^2-\eta_2^2\right)+35 \mu_1 \eta_2^2 \cos (\nu )\right)\\
		H_3&=&\frac{1}{10976}\Big(2 \left(\xi_2^2+\eta_2^2\right) \left(13 \mu_1^3-140 \mu_1 \mu_2+392\mu_3\right)+8232 \cos ^3(\nu ) \left(2 \eta_1^2-\eta_2^2\right)\\
		&+&7 \eta_2^2 \left(27 \mu_1^2-140 \mu_2\right) \cos (\nu )+980 \mu_1 \eta_2^2 \cos ^2(\nu )\Big).
	\end{eqnarray*}
	
	Therefore, applying \eqref{rotation} into above Hamiltonian, with $N_1=2$ and $N_2=4$, we obtain the following Hamiltonian
	\begin{equation}\label{hafterrotation}
		\mathbf{H}= \mathbf{H}_0+\epsilon \mathbf{H}_1+\dfrac{\epsilon^2}{2!}\mathbf{H}_2+\cdots+\dfrac{\epsilon^k}{k!}\mathbf{H}_k+\mathcal{O}(\epsilon^{ k+1 }),
	\end{equation}
	where
	\begin{eqnarray*}
		\mathbf{H}_0&=&0\\
		\mathbf{H}_1&=&\frac{3}{8} \left( \cos (\nu )- \cos (3 \nu )\right)X_1^2
		+\frac{3}{4}  \left(- \sin (\nu )-\sin (3 \nu )\right)X_1 Y_1\\
		&+&\frac{3}{8}  \left(3 \cos (\nu )+\cos (3 \nu )\right)Y_1^2\\
		&+& \left(\frac{\mu_1}{14}+\frac{3}{16} \cos (3 \nu )+\frac{3}{16} \cos (5 \nu )-\frac{3 \cos (\nu )}{8}\right)X_2^2\\
		&+&\frac{3}{8} \left(\sin (3 \nu )+ \sin (5 \nu )\right)X_2 Y_2\\
		&+& \left(\frac{\mu_1}{14}-\frac{3}{16} \cos (3 \nu )-\frac{3}{16} \cos (5 \nu )-\frac{3 \cos (\nu )}{8}\right)Y_2^2\\
		\mathbf{H}_2&=&\frac{3}{16}\left(\cos (4 \nu )-1\right) X_1^2 
		+\frac{3}{8} \left( 2\sin (2 \nu )+ \sin (4 \nu )\right)X_1 Y_1\\
		&+&\frac{3}{16} \left(-4 \cos (2 \nu )- \cos (4 \nu )-3\right)Y_1^2\\
		&+& \Big(-\frac{5 \mu_1^2}{392}-\frac{5}{112} \mu_1 \cos (\nu )+\frac{5}{224} \mu_1 \cos (3 \nu )+\frac{5}{224} \mu_1 \cos (5 \nu )+\frac{\mu_2}{14}\\
		&+&\frac{3}{32} \cos (2 \nu )-\frac{3}{16} \cos (4 \nu )-\frac{3}{32} \cos (6 \nu )+\frac{3}{16}\Big)X_2^2\\
		&+& \Big(\frac{5}{112} \mu_1 \sin (3 \nu )+\frac{5}{112} \mu_1 \sin (5 \nu )+\frac{1}{16} (-3) \sin (2 \nu )-\frac{3}{8} \sin (4 \nu )\\
		&-&\frac{3}{16} \sin (6 \nu )\Big)X_2 Y_2\\
		&+& \Big(-\frac{5 \mu_1^2}{392}-\frac{5}{112} \mu_1 \cos (\nu )-\frac{5}{224} \mu_1 \cos (3 \nu )-\frac{5}{224} \mu_1 \cos (5 \nu )+\frac{\mu_2}{14}\\
		&+&\frac{9}{32} \cos (2 \nu )+\frac{3}{16} \cos (4 \nu )+\frac{3}{32} \cos (6 \nu )+\frac{3}{16}\Big)Y_2^2\\
		\mathbf{H}_3&=&\frac{3}{32} \left(2 \cos (\nu )- \cos (3 \nu )- \cos (5 \nu )\right)X_1^2\\
		&+&\frac{3}{16}  \left(-2 \sin (\nu )-3 \sin (3 \nu )-\sin (5 \nu )\right)X_1 Y_1\\
		&+& \left(\frac{15 \cos (\nu )}{16}+\frac{15}{32} \cos (3 \nu )+\frac{3}{32} \cos (5 \nu )\right)Y_1^2\\
		&+& \Big(\frac{13 \mu_1^3}{5488}+\frac{27 \mu_1^2 \cos (\nu )}{3136}-\frac{27 \mu_1^2 \cos (3 \nu )}{6272}-\frac{27 \mu_1^2 \cos (5 \nu )}{6272}-\frac{5 \mu_1 \mu_2}{196}\\
		&+&\frac{5}{448} \mu_1 \cos (2 \nu )-\frac{5}{224} \mu_1 \cos (4 \nu )-\frac{5}{448} \mu_1 \cos (6 \nu )+\frac{5 \mu_1}{224}-\frac{5}{112} \mu_2 \cos (\nu )\\
		&+&\frac{5}{224} \mu_2 \cos (3 \nu )+\frac{5}{224} \mu_2 \cos (5 \nu )+\frac{\mu_3}{14}+\frac{3}{64} \cos (3 \nu )+\frac{9}{64} \cos (5 \nu )+\frac{3}{64} \cos (7 \nu )\\
		&-&\frac{15 \cos (\nu )}{64}\Big)X_2^2\\
		&+& \Big(-\frac{27 \mu_1^2 \sin (3 \nu )}{3136}-\frac{27 \mu_1^2 \sin (5 \nu )}{3136}-\frac{5}{224} \mu_1 \sin (2 \nu )-\frac{5}{112} \mu_1 \sin (4 \nu )\\
		&-&\frac{5}{224} \mu_1 \sin (6 \nu )+\frac{5}{112} \mu_2 \sin (3 \nu )+\frac{5}{112} \mu_2 \sin (5 \nu )+\frac{3 \sin (\nu )}{32}+\frac{9}{32} \sin (3 \nu )\\
		&+&\frac{9}{32} \sin (5 \nu )+\frac{3}{32} \sin (7 \nu )\Big)X_2 Y_2\\
		&+& \Big(\frac{13 \mu_1^3}{5488}+\frac{27 \mu_1^2 \cos (\nu )}{3136}+\frac{27 \mu_1^2 \cos (3 \nu )}{6272}+\frac{27 \mu_1^2 \cos (5 \nu )}{6272}-\frac{5 \mu_1 \mu_2}{196}\\
		&+&\frac{15}{448} \mu_1 \cos (2 \nu )+\frac{5}{224} \mu_1 \cos (4 \nu )+\frac{5}{448} \mu_1 \cos (6 \nu )+\frac{5 \mu_1}{224}-\frac{5}{112} \mu_2 \cos (\nu )\\
		&-&\frac{5}{224} \mu_2 \cos (3 \nu )-\frac{5}{224} \mu_2 \cos (5 \nu )+\frac{\mu_3}{14}-\frac{15}{64} \cos (3 \nu )-\frac{9}{64} \cos (5 \nu )-\frac{3}{64} \cos (7 \nu )\\
		&-&\frac{21 \cos (\nu )}{64}\Big)Y_2^2.
	\end{eqnarray*}
	Applying Deprit-Hori method, the expression for the autonomous Hamiltonian $2\pi$- periodic $K$ up to fourth order is
	\begin{eqnarray*}
		K&=&K_0+\displaystyle\sum_{j=1}^{4}\dfrac{\epsilon^j}{j!}K_j,
	\end{eqnarray*}
	where
	\begin{eqnarray*}
		K_0&=&0\\
		K_1&=&\frac{1}{14} \mu_1 \left(X_2^2+Y_2^2\right)\\
		K_2&=&\frac{3}{4}(-X_1^2+ Y_1^2)+\frac{1}{2}  \left(-\frac{5 \mu_1^2}{98}+\frac{2 \mu_2}{7}+\frac{9}{10}\right)(X_2^2+Y_2^2)\\
		K_3&=&\dfrac{3}{68600} \left(325 \mu_1^3-3500 \mu_1 \mu_2+4704 \mu_1+9800 \mu_3\right)\left(X_2^2+Y_2^2\right)\\
		K_4&=&-\frac{45 }{16}X_1^2-\frac{189 }{16}Y_1^2+\dfrac{3}{4802000}\Big(-17625 \mu_1^4+28 \mu_1^2 (9750 \mu_2-6937)-980000 \mu_1 \mu_3\\
		&-&490000 \mu_2^2+686 (1920 \mu_2+4000 \mu_4+5859)\Big)X_2^2\\
		&-&\dfrac{3}{4802000}\Big(17625 \mu_1^4-28 \mu_1^2 (9750 \mu_2-6937)+980000 \mu_1 \mu_3\\
		&+&196 \left(2500 \mu_2^2-6720 \mu_2-14000 \mu_4-84819\right)\Big)Y_2^2.
	\end{eqnarray*}
	The stability and instability regions are based on the boundary conditions of the characteristic equation of the $2\pi$-periodic Hamiltonian system associated to $K$, given by $\lambda^4+a\lambda^2+b=0$, that occurs when
	\begin{equation}\label{relestab}
		a> 0\mbox{ and } b=0
		\quad
		\text{or}
		\quad
		a>0 \mbox{ and } d=a^2-4b=0.
	\end{equation}
	In this case, this coefficients represented in power series in $\epsilon$ are:
	
	\begin{eqnarray*}
		a&=&\frac{\mu_1^2}{49}\epsilon^2 +\frac{1}{3430}\left(-25 \mu_1^3+140 \mu_1 \mu_2+441 \mu_1\right)\epsilon^3
		+\frac{1}{137200} \Big(275 \mu_1^4-3000 \mu_1^2 \mu_2-462 \mu_1^2\\
		&+&5600 \mu_1 \mu_3+2800 \mu_2^2+17640 \mu_2-49392\Big)\epsilon^4+O\left(\epsilon^5\right)\\
		b&=&-\frac{9}{784}  \mu_1^2\epsilon^6+\frac{9}{54880}  \left(25 \mu_1^3-140 \mu_1 \mu_2-441 \mu_1\right)\epsilon^7+O\left(\epsilon^8\right)\\
		d&=&\frac{ \mu_1^4}{2401}\epsilon^4+\frac{1}{84035} \left(-25 \mu_1^5+140 \mu_1^3 \mu_2+441 \mu_1^3\right)\epsilon^5
		+\frac{1}{23529800} \Big(3175 \mu_1^6-35000 \mu_1^4 \mu_2\\
		&-&47334 \mu_1^4+39200 \mu_1^3 \mu_3+58800 \mu_1^2 \mu_2^2+370440 \mu_1^2 \mu_2+1123668 \mu_1^2\Big)\epsilon^6+O\left(\epsilon^7\right).
	\end{eqnarray*}

	From the conditions $b=0$ and $d=0$ we get $\mu_1=0$. The expressions of $a$, $b$ and $d$ evaluated  for $\mu_1=0$ reduce to
	
	\begin{dgroup}\label{coefa}
		\begin{dmath*}
			a=\frac{1}{2450}(5 \mu_2+42) (10 \mu_2-21)\epsilon^4+
			\frac{\mu_3}{490} (20 \mu_2+63) \epsilon^5
			+\mathcal{O}(\epsilon^6)
		\end{dmath*}
		\begin{dmath*}
			b=-\frac{9}{313600} (20 \mu_2+63)^2\epsilon^8
			-\frac{9\mu_3}{7840} (20 \mu_2+63) \epsilon^9
			+\mathcal{O}(\epsilon^{10})
		\end{dmath*}
		\begin{dmath*}
			d=\frac{1}{96040000}\left(200 \mu_2^2+1260 \mu_2+7497\right)^2\epsilon^8
			+\frac{ \mu_3}{2401000}(20 \mu_2+63) \left(200 \mu_2^2+1260 \mu_2+7497\right)\epsilon^{9}
			+\mathcal{O}(\epsilon^{10}).
		\end{dmath*}
	\end{dgroup}
	
	From $b=0$ we get the curves
	
	$$
	\begin{aligned}
		\mu_4^1 &=  3-\dfrac{63}{20}\;\epsilon^2+\frac{14553}{8000 }\;\epsilon^4\\
		\mu_4^2 &= 3-\dfrac{63}{20}\;\epsilon^2-\frac{22197}{8000}\; \epsilon^4.
	\end{aligned}
	$$

	\begin{figure}[!htb]
		\centering
		\includegraphics[scale=1]{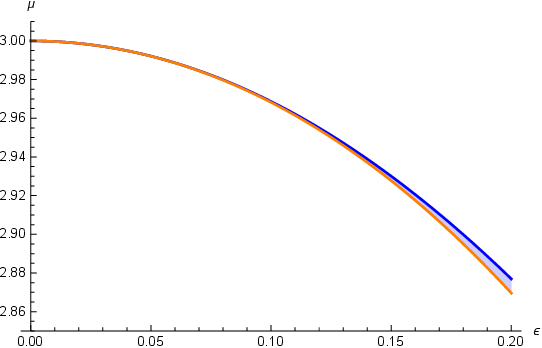}
		\caption{\textmd{Boundary curves for $b=0$.}}
		\label{colinear}
	\end{figure}

	If the condition $a>0$ were satisfied, then the shaded region in the  \ref{colinear} would be the instability region. However, we can see from the coefficient of the fourth order in $\epsilon$ of $a$ in \eqref{coefa} that $a$ can be positive only if $\mu_2<-\frac{42}{5}$ or $\mu_2>\frac{21}{10}$ and  $\mu_2=-\frac{63}{20}$, obtained from $b=0$, does not satisfy these two conditions.
	
	The second condition of \eqref{relestab}, $d=0$ and $a>0$, there is no real solution.

	\vspace{.5cm}

	\item {\bf For $\mu_3^*$ and $\mu_5^*$ both resonances are different where one is even and the other one is odd}, that is: $2\omega_1(\mu_3^*)=2$, $2\omega_2(\mu_3^*)=3$ and $2\omega_1(\mu_5^*)=2$, $2\omega_2(\mu_5^*)=5$ 
	
	\vspace{1cm}
	
	2.1\; In the first case, $N_1=2$ and $N_2=3$, the Hamiltonian of \eqref{hmuexp} is given by
	\begin{eqnarray*}
		H_0&=&\frac{1}{4} \left( 2(\xi_1^2+ \eta_1^2)+3(\xi_2^2+\eta_2^2)\right)\\
		H_1&=&\frac{1}{2688}\left(529 \mu_1 \left(\xi_2^2+\eta_2^2\right)+224 \cos (\nu ) \left(18 \eta_1^2-5 \eta_2^2\right)\right)\\
		H_2&=&-\frac{1}{10838016}\Big(529 \left(1357 \mu_1^2-4032\mu_2\right) \left(\xi_2^2+\eta_2^2\right)+903168 \cos ^2(\nu ) \left(18 \eta_1^2-5 \eta_2^2\right)\\
		&+&3080896 \mu_1 \eta_2^2 \cos (\nu )\Big)\\
		H_3&=&\frac{1}{21849440256} \Big(529 \left(\xi_2^2+\eta_2^2\right) \left(1060645 \mu_1^3-5471424 \mu_1\mu_2+8128512 \mu_3\right)\\
		&+&1820786688 \cos ^3(\nu ) \left(18 \eta_1^2-5 \eta_2^2\right)+355488 \eta_2^2 \left(7291 \mu_1^2-17472\mu_2\right) \cos (\nu )\\
		&+&6211086336 \mu_1 \eta_2^2 \cos ^2(\nu )\Big).
	\end{eqnarray*}
	
	Then, applying \eqref{rotation} into above Hamiltonian, with $N_1=2$ and $N_2=3$, we obtain the following Hamiltonian

	\begin{eqnarray*}
		\mathbf{H}_0&=&0\\
		\mathbf{H}_1&=&\left(\frac{3}{8} \cos (\nu )-\frac{3}{8} \cos (3 \nu )\right)X_1^2 
		+\left(-\frac{3}{4}  \sin (\nu )-\frac{3}{4} \sin (3 \nu )\right)X_1 Y_1 \\
		&+&\left(\frac{9}{8} \cos (\nu )+\frac{3}{8} \cos (3 \nu )\right)Y_1^2 \\	
		&+& \left(\frac{529 \mu_1}{2688}+\frac{5}{48} \cos (2 \nu )+\frac{5}{48} \cos (4 \nu )-\frac{5 \cos (\nu )}{24}\right)X_2^2\\
		&+&\left(\frac{5}{24} \sin (2 \nu )+\frac{5}{24} \sin (4 \nu )\right)X_2 Y_2 \\
		&+&\left(\frac{529 \mu_1}{2688}-\frac{5}{48} \cos (2 \nu )-\frac{5}{48} \cos (4 \nu )-\frac{5 \cos (\nu )}{24}\right)Y_2^2\\
		\mathbf{H}_2&=& \left(\frac{3}{16} \cos (4 \nu )-\frac{3}{16}\right)X_1^2
		+ \left(\frac{3}{4} \sin (2 \nu )+\frac{3}{8} \sin (4 \nu )\right)X_1 Y_1\\
		&+& \left(-\frac{3}{4}  \cos (2 \nu )-\frac{3}{16} \cos (4 \nu )-\frac{9}{16}\right)Y_1^2\\
		&+&\Big(-\frac{717853 \mu_1^2}{10838016}+\frac{6877 \mu_1 \cos (2 \nu )}{96768}+\frac{6877 \mu_1 \cos (4 \nu )}{96768}-\frac{6877 \mu_1 \cos (\nu )}{48384}\\
		&+&\frac{529 \mu_2}{2688}+\frac{5}{48} \cos (2 \nu )-\frac{5}{48} \cos (3 \nu )-\frac{5}{96} \cos (5 \nu )-\frac{5 \cos (\nu )}{96}+\frac{5}{48}\Big)X_2^2 \\
		&+&\left(\frac{6877 \mu_1 \sin (2 \nu )}{48384}+\frac{6877 \mu_1 \sin (4 \nu )}{48384}-\frac{1}{48} 5 \sin (\nu )-\frac{5}{24} \sin (3 \nu )-\frac{5}{48} \sin (5 \nu )\right)X_2 Y_2 \\
		&+& \Big(-\frac{717853 \mu_1^2}{10838016}-\frac{6877 \mu_1 \cos (\nu )}{48384}-\frac{6877 \mu_1 \cos (2 \nu )}{96768}-\frac{6877 \mu_1 \cos (4 \nu )}{96768}\\
		&+&\frac{529 \mu_2}{2688}+\frac{5 \cos (\nu )}{96}+\frac{5}{48} \cos (2 \nu )+\frac{5}{48} \cos (3 \nu )+\frac{5}{96} \cos (5 \nu )+\frac{5}{48}\Big)Y_2^2
	\end{eqnarray*}
	
	\begin{eqnarray*}
		\mathbf{H}_3&=& \left(\frac{3 \cos (\nu )}{16}-\frac{3}{32} \cos (3 \nu )-\frac{3}{32} \cos (5 \nu )\right)X_1^2\\
		&+&\left(-\frac{1}{8} 3 \sin (\nu )-\frac{9}{16} \sin (3 \nu )-\frac{3}{16} \sin (5 \nu )\right)X_1 Y_1 \\
		&+&\left(\frac{15 \cos (\nu )}{16}+\frac{15}{32} \cos (3 \nu )+\frac{3}{32} \cos (5 \nu )\right)Y_1^2 \\
		&+&\Big(\frac{561081205 \mu_1^3}{21849440256}+\frac{3856939 \mu_1^2 \cos (\nu )}{65028096}-\frac{3856939 \mu_1^2 \cos (2 \nu )}{130056192}-\frac{3856939 \mu_1^2 \cos (4 \nu )}{130056192}\\
		&-&\frac{717853 \mu_1 \mu_2}{5419008}+\frac{6877 \mu_1 \cos (2 \nu )}{96768}-\frac{6877 \mu_1 \cos (3 \nu )}{96768}-\frac{6877 \mu_1 \cos (\nu )}{193536}-\frac{6877 \mu_1 \cos (5 \nu )}{193536}\\
		&+&\frac{6877 \mu_1}{96768}+\frac{6877 \mu_2 \cos (2 \nu )}{96768}+\frac{6877 \mu_2 \cos (4 \nu )}{96768}-\frac{6877 \mu_2 \cos (\nu )}{48384}+\frac{529 \mu_3}{2688}\\
		&+&\frac{5}{64} \cos (2 \nu )-\frac{5}{96} \cos (3 \nu )+\frac{5}{64} \cos (4 \nu )+\frac{5}{192} \cos (6 \nu )-\frac{5 \cos (\nu )}{32}+\frac{5}{192}\Big)X_2^2\\
		&+&\Big(-\frac{3856939 \mu_1^2 \sin (2 \nu )}{65028096}
		-\frac{3856939 \mu_1^2 \sin (4 \nu )}{65028096}-\frac{6877 \mu_1 \sin (3 \nu )}{48384}-\frac{6877 \mu_1 \sin (\nu )}{96768}\\
		&-&\frac{6877 \mu_1 \sin (5 \nu )}{96768}+\frac{6877 \mu_2 \sin (2 \nu )}{48384}+\frac{6877 \mu_2 \sin (4 \nu )}{48384}+\frac{5}{32} \sin (2 \nu )+\frac{5}{32} \sin (4 \nu )\\
		&+&\frac{5}{96} \sin (6 \nu )\Big)X_2 Y_2 \\
		&+& \Big(\frac{561081205 \mu_1^3}{21849440256}+\frac{3856939 \mu_1^2 \cos (\nu )}{65028096}+\frac{3856939 \mu_1^2 \cos (2 \nu )}{130056192}+\frac{3856939 \mu_1^2 \cos (4 \nu )}{130056192}\\
		&-&\frac{717853 \mu_1 \mu_2}{5419008}+\frac{6877 \mu_1 \cos (\nu )}{193536}+\frac{6877 \mu_1 \cos (2 \nu )}{96768}+\frac{6877 \mu_1 \cos (3 \nu )}{96768}+\frac{6877 \mu_1 \cos (5 \nu )}{193536}\\
		&+&\frac{6877 \mu_1}{96768}-\frac{6877 \mu_2 \cos (\nu )}{48384}-\frac{6877 \mu_2 \cos (2 \nu )}{96768}-\frac{6877 \mu_2 \cos (4 \nu )}{96768}+\frac{529 \mu_3}{2688}\\
		&-&\frac{5}{64} \cos (2 \nu )-\frac{5}{96} \cos (3 \nu )-\frac{5}{64} \cos (4 \nu )-\frac{5}{192} \cos (6 \nu )-\frac{5 \cos (\nu )}{32}-\frac{5}{192}\Big)Y_2^2.
	\end{eqnarray*}
	
	Applying Deprit-Hori method, the expression for the autonomous Hamiltonian $4\pi$- periodic $K$ up to third order is
	
	\begin{equation}\label{K} K=\displaystyle\sum_{j=0}^{3}\dfrac{\epsilon^j}{j!}K_j,
	\end{equation}	
	where each coefficient is given by
	\begin{eqnarray*}
		K_0&=&0\\
		K_1&=&\frac{529}{2688} \mu_1 \left(X_2^2+Y_2^2\right)\\
		K_2&=&\dfrac{1}{2}\left(\frac{3}{4}(-X_1^2+Y_1^2) +\frac{1}{5419008}\left(1008 (2116 \mu_2+1295)-717853 \mu_1^2\right)\right)(X_2^2+Y_2^2)\\
		K_3&=&\frac{1}{21849440256}
		\Big[X_2^2 \Big(561081205 \mu_1^3+393736287 \mu_1^2-133308 \mu_1 (21712 \mu_2-16163)\\
		&+&254016 (16928 \mu_3+5915)\Big)\\
		&+&Y_2^2 \Big(561081205 \mu_1^3-393736287 \mu_1^2-133308 \mu_1 (21712 \mu_2-16163)\\
		&+&254016 (16928 \mu_3-5915)\Big)\Big]\epsilon^3.
	\end{eqnarray*}
	Just like in previous case, we have the characteristic equation associated to $4\pi-$ periodic Hamiltonian $K$, $\lambda^4+a\lambda^2+b$, where the coefficients give us the boundary stability conditions. This coefficients are:
	
	\begin{dgroup*}
		\begin{dmath*}
			a=\frac{279841}{1806336} \mu_1^2 \epsilon ^2-\frac{529}{3641573376} \mu_1  \left(717853 \mu_1^2-1008 (2116 \mu_2+1295)\right)\epsilon ^3+\frac{23}{4195092529152} \left(10574911549 \mu_1^4-76176 \mu_1^2 (749064 \mu_2-218939)+56514060288 \mu_1 \mu_3+145152 \left(194672 \mu_2^2+238280 \mu_2-633913\right)\right) \epsilon ^4+\mathcal{O}(\epsilon^5)
		\end{dmath*}
		\begin{dmath*}
			b=-\frac{279841}{3211264} \mu_1^2 \epsilon ^6+\frac{529}{6473908224} \mu_1  \left(717853 \mu_1^2-1008 (2116 \mu_2+1295)\right)\epsilon ^7+\frac{1}{7457942274048}\left(-243222965627 \mu_1^4+1752048 \mu_1^2 (749064 \mu_2+152005)-1299823386624 \mu_1 \mu_3-145152 (2116 \mu_2+1295)^2\right)\epsilon ^8 +\mathcal{O}(\epsilon^9)
		\end{dmath*}
		\begin{dmath*}
			d=\frac{78310985281}{3262849744896} \mu_1^4 \epsilon ^4-\frac{148035889}{3288952542855168} \mu_1^3  \Big(717853 \mu_1^2-1008 (2116 \mu_2+1295)\Big)\epsilon ^5+\frac{279841}{26522113305584074752} \mu_1^2  \Big(2733186618607 \mu_1^4-12264336 \mu_1^2 (1248440 \mu_2+86681)+9098763706368 \mu_1 \mu_3+3048192 \left(4477456 \mu_2^2+5480440 \mu_2+7096033\right)\Big)\epsilon ^6+\mathcal{O}(\epsilon^7).
		\end{dmath*}
	\end{dgroup*}
	
	Note that because of boundary conditions \eqref{relestab}, both $b=0$ and $d=0$, gives us $\mu_1=0$. Then, taking $\mu_1=0$, the expressions of $a$, $b$ and $d$ reduce to
	\begin{dgroup}\label{N3conditions}
		\begin{dmath*}
			a=\frac{23}{28901376}(92 \mu_2-119) (2116 \mu_2+5327)\epsilon^4
			+\frac{529}{3612672} (2116 \mu_2+1295) \mu_3\epsilon^5+\mathcal{O}(\epsilon^6)
		\end{dmath*}
		\begin{dmath*}
			b=-\frac{(2116\mu_2+1295)^2}{51380224}\epsilon^8
			-\frac{529 (2116\mu_2+1295) \mu_3}{6422528}\epsilon^9
			+\frac{97214524672}{1657320505344}\mu_2^3+59182234720\mu_2^2-44436\mu_2 (6500352\mu_4+2366525)-7 \left(20632117248 \mu_3^2+25253867520\mu_4+6658099175\right)\epsilon^{10}
			+\mathcal{O}(\epsilon^{11})
		\end{dmath*}
		\begin{dmath*}
			d=\frac{\left(4477456\mu_2^2+5480440\mu_2+17934049\right)^2}{835289534693376}\epsilon^{8}+\frac{529}{52205595918336} \Big(9474296896\mu_2^3+17394916560\mu_2^2+45045617484\mu_2+23224593455\Big) \mu_3 \epsilon^9	+\mathcal{O}(\epsilon^{10}).
		\end{dmath*}
	\end{dgroup}
	The condition $b=0$ give us the curves
	\begin{eqnarray*}
		\mu_3^1&=&\frac{20}{23}-\frac{1295 }{2116}\epsilon ^2+\frac{5915 }{16928}\epsilon ^3-\frac{106154825 }{448524288}\epsilon ^4+\mathcal{O}(\epsilon^5)\\
		\mu_3^2&=&\frac{20}{23}-\frac{1295 }{2116}\epsilon ^2-\frac{5915 }{16928}\epsilon ^3-\frac{106154825 }{448524288}\epsilon ^4+\mathcal{O}(\epsilon^5).
	\end{eqnarray*}
	
	\begin{figure}[!htb]
		\centering
		\includegraphics[scale=1]{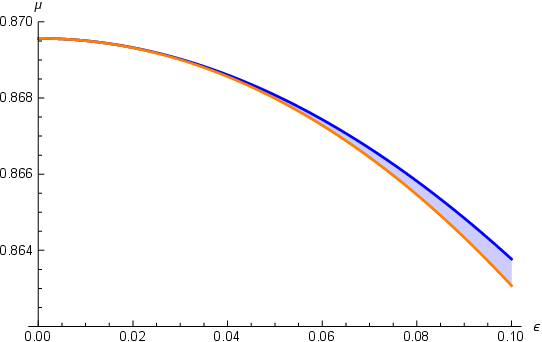}
		\caption{Boundary curves for $b=0$.}
		\label{fig3}
	\end{figure}
	
	If the condition $a>0$ were satisfied, then the shaded region in the Figure \ref{fig3} would be the instability region. However, we can see from the coefficient of the fourth order of $a$ in \eqref{N3conditions} that $a$ can be positive only if $\mu_2<-\frac{5327}{2116}$ or $\mu_2>\frac{119}{92}$ and $\mu_2=-\frac{1295}{2116}$ obtained from $b=0$ does not satisfy any of these two conditions.
	
	The second condition of \eqref{relestab}, $d=0$ and $a>0$, there is no real solution.
	
	\vspace{.5cm}
	2.2\;  In the second case, $N_1=2$ and $N_2=5$, the Hamiltonian of \eqref{hmuexp} is given by
	
	\begin{eqnarray*}
		H_0&=&\frac{1}{4} \left(2 \xi_1^2+5 \xi_2^2+2 \eta_1^2+5 \eta_2^2\right)\\
		H_1&=&\frac{1}{640}   \left(7 \mu_1 \left(\xi_2^2+\eta_2^2\right)+96 \cos (\nu ) \left(10 \eta_1^2-7 \eta_2^2\right)\right)\\
		H_2&=&-\frac{1}{1024000} \Big(7 \left(107 \mu_1^2-1600 \mu_2\right) \left(\xi_2^2+\eta_2^2\right)+153600 \cos ^2(\nu ) \left(10 \eta_1^2-7 \eta_2^2\right)\\
		&+&12992 \mu_1 \eta_2^2 \cos (\nu )\Big)\\
		H_3&=&\frac{1}{819200000}\Big(7 \left(\xi_2^2+\eta_2^2\right) \left(5749 \mu_1^3-171200 \mu_1 \mu_2+1280000 \mu_3\right)\\
		&+&122880000 \cos ^3(\nu ) \left(10 \eta_1^2-7 \eta_2^2\right)+224 \eta_2^2 \left(3159 \mu_1^2-46400 \mu_2\right) \cos (\nu )\\
		&+&10393600 \mu_1 \eta_2^2 \cos ^2(\nu )\Big).
	\end{eqnarray*}
	
	Then, applying \eqref{rotation} into above Hamiltonian, with $N_1=2$ and $N_2=5$, we obtain the following Hamiltonian

	\begin{eqnarray*}
		\mathbf{H}_0&=&0\\
		\mathbf{H}_1&=&\left(\frac{3 \cos (\nu )}{8}-\frac{3}{8} \cos (3 \nu )\right)X_1^2 
		+ \left(-\frac{1}{4} 3 \sin (\nu )-\frac{3}{4} \sin (3 \nu )\right)X_1 Y_1\\
		&+& \left(\frac{9 \cos (\nu )}{8}+\frac{3}{8} \cos (3 \nu )\right)Y_1^2\\
		&+& \left(\frac{7 \mu_1}{640}+\frac{21}{80} \cos (4 \nu )+\frac{21}{80} \cos (6 \nu )-\frac{21 \cos (\nu )}{40}\right)X_2^2\\
		&+& \left(\frac{21}{40} \sin (4 \nu )+\frac{21}{40} \sin (6 \nu )\right)X_2 Y_2\\
		&+& \left(\frac{7 \mu_1}{640}-\frac{21}{80} \cos (4 \nu )-\frac{21}{80} \cos (6 \nu )-\frac{21 \cos (\nu )}{40}\right)Y_2^2\\
		\mathbf{H}_2&=& \left(\frac{3}{16} \cos (4 \nu )-\frac{3}{16}\right)X_1^2
		+\left(\frac{3}{4} \sin (2 \nu )+\frac{3}{8} \sin (4 \nu )\right)X_1 Y_1 \\
		&+& \left(-\frac{1}{4} 3 \cos (2 \nu )-\frac{3}{16} \cos (4 \nu )-\frac{9}{16}\right)Y_1^2\\
		&+& \Big(-\frac{749 \mu_1^2}{1024000}+\frac{203 \mu_1 \cos (4 \nu )}{64000}+\frac{203 \mu_1 \cos (6 \nu )}{64000}-\frac{203 \mu_1 \cos (\nu )}{32000}+\frac{7 \mu_2}{640}\\
		&+&\frac{21}{80} \cos (2 \nu )-\frac{21}{160} \cos (3 \nu )
		-\frac{21}{80} \cos (5 \nu )-\frac{21}{160} \cos (7 \nu )+\frac{21}{80}\Big)X_2^2\\
		&+& \Big(\frac{203 \mu_1 \sin (4 \nu )}{32000}+\frac{203 \mu_1 \sin (6 \nu )}{32000}+\frac{1}{80} (-21) \sin (3 \nu )-\frac{21}{40} \sin (5 \nu )\\
		&-&\frac{21}{80} \sin (7 \nu )\Big)X_2 Y_2\\
		&+& \Big(-\frac{749 \mu_1^2}{1024000}-\frac{203 \mu_1 \cos (\nu )}{32000}-\frac{203 \mu_1 \cos (4 \nu )}{64000}-\frac{203 \mu_1 \cos (6 \nu )}{64000}+\frac{7 \mu_2}{640}\\
		&+&\frac{21}{80} \cos (2 \nu )+\frac{21}{160} \cos (3 \nu )
		+\frac{21}{80} \cos (5 \nu )+\frac{21}{160} \cos (7 \nu )+\frac{21}{80}\Big)Y_2^2\\
		\mathbf{H}_3&=& \left(\frac{3}{16} \cos (\nu )-\frac{3}{32} \cos (3 \nu )-\frac{3}{32} \cos (5 \nu )\right)X_1^2\\
		&+&\left(-\frac{3}{8} \sin (\nu )-\frac{9}{16} \sin (3 \nu )-\frac{3}{16} \sin (5 \nu )\right)X_1 Y_1 \\
		&+& \left(\frac{15 \cos (\nu )}{16}+\frac{15}{32} \cos (3 \nu )+\frac{3}{32} \cos (5 \nu )\right)Y_1^2\\
		&+& \Big(\frac{40243 \mu_1^3}{819200000}+\frac{22113 \mu_1^2 \cos (\nu )}{51200000}-\frac{22113 \mu_1^2 \cos (4 \nu )}{102400000}-\frac{22113 \mu_1^2 \cos (6 \nu )}{102400000}\\
		&-&\frac{749 \mu_1 \mu_2}{512000}+\frac{203 \mu_1 \cos (2 \nu )}{64000}-\frac{203 \mu_1 \cos (5 \nu )}{64000}-\frac{203 \mu_1 \cos (3 \nu )}{128000}-\frac{203 \mu_1 \cos (7 \nu )}{128000}\\
		&+&\frac{203 \mu_1}{64000}+\frac{203 \mu_2 \cos (4 \nu )}{64000}+\frac{203 \mu_2 \cos (6 \nu )}{64000}-\frac{203 \mu_2 \cos (\nu )}{32000}+\frac{7 \mu_3}{640}+\frac{21}{320} \cos (2 \nu )\\
		&-&\frac{21}{160} \cos (3 \nu )+\frac{63}{320} \cos (4 \nu )+\frac{63}{320} \cos (6 \nu )+\frac{21}{320} \cos (8 \nu )-\frac{63 \cos (\nu )}{160}\Big)X_2^2\\
		&+& \Big(-\frac{22113 \mu_1^2 \sin (4 \nu )}{51200000}-\frac{22113 \mu_1^2 \sin (6 \nu )}{51200000}-\frac{203 \mu_1 \sin (5 \nu )}{32000}-\frac{203 \mu_1 \sin (3 \nu )}{64000}\\
		&-&\frac{203 \mu_1 \sin (7 \nu )}{64000}+\frac{203 \mu_2 \sin (4 \nu )}{32000}+\frac{203 \mu_2 \sin (6 \nu )}{32000}+\frac{21}{160} \sin (2 \nu )+\frac{63}{160} \sin (4 \nu )\\
		&+&\frac{63}{160} \sin (6 \nu )+\frac{21}{160} \sin (8 \nu )\Big)X_2 Y_2\\
		&+& \Big(\frac{40243 \mu_1^3}{819200000}+\frac{22113 \mu_1^2 \cos (\nu )}{51200000}+\frac{22113 \mu_1^2 \cos (4 \nu )}{102400000}+\frac{22113 \mu_1^2 \cos (6 \nu )}{102400000}\\
		&-&\frac{749 \mu_1 \mu_2}{512000}+\frac{203 \mu_1 \cos (2 \nu )}{64000}+\frac{203 \mu_1 \cos (3 \nu )}{128000}+\frac{203 \mu_1 \cos (5 \nu )}{64000}+\frac{203 \mu_1 \cos (7 \nu )}{128000}\\
		&+&\frac{203 \mu_1}{64000}-\frac{203 \mu_2 \cos (\nu )}{32000}-\frac{203 \mu_2 \cos (4 \nu )}{64000}-\frac{203 \mu_2 \cos (6 \nu )}{64000}+\frac{7 \mu_3}{640}-\frac{21}{320} \cos (2 \nu )\\
		&-&\frac{21}{160} \cos (3 \nu )-\frac{63}{320} \cos (4 \nu )-\frac{63}{320} \cos (6 \nu )-\frac{21}{320} \cos (8 \nu )-\frac{63 \cos (\nu )}{160}\Big)Y_2^2.
	\end{eqnarray*}
	Applying Deprit-Hori method, the expression for the autonomous Hamiltonian $4\pi$- periodic $K$ up to fourth order is$K$ up to 
	\begin{eqnarray*}
		K&=&K_0+\displaystyle\sum_{j=1}^{4}\dfrac{\epsilon^j}{j!}K_j,
	\end{eqnarray*}
	where
	\begin{eqnarray*}
		K_0&=&0\\
		K_1&=&\frac{7 \mu_1}{640}\left(X_2^2+Y_2^2\right)\\
		K_2&=&\dfrac{1}{2}\left[\frac{3}{4}\left(-X_1^2+Y_1^2\right)+\frac{7(-107\mu_1^2+400(117+4\mu_2))}{512000}\left(X_2^2+Y_2^2\right)\right]\\
		K_3&=&\frac{7}{819200000}\left[5749\mu_1^3-100\mu_1(-5973+1712\mu_2)+1280000\mu_3\right](X_2^2+Y_2^2).\\
	\end{eqnarray*}
	Just like in previous case, we have the characteristic equation associated to $4\pi-$ periodic Hamiltonian $K$, $\lambda^4+a\lambda^2+b$, where the coefficients give us the boundary stability conditions. This coefficients are:
	\begin{eqnarray*}
		a&=&\frac{49}{102400}\mu_1^2 \epsilon^2-\frac{49}{81920000} \mu_1 \left(107 \mu_1^2-400 (4 \mu_2+117)\right)\epsilon^3\\
		&+&\frac{1}{52428800000}\Big[337561 \mu_1^4-11760 \mu_1^2 (856 \mu_2+6355)+50176000 \mu_1 \mu_3\\
		&+&32000 \left(784 \mu_2^2+45864 \mu_2-250839\right)\Big] \epsilon^4  +\mathcal{O}(\epsilon^5)\\
		b&=&-\frac{441}{1638400} \mu_1^2\epsilon^6+\frac{441}{1310720000} \mu_1 \left(107 \mu_1^2-400 (4 \mu_2+117)\right)\epsilon^7\\
		&-&\frac{441}{838860800000} \Big(6889 \mu_1^4-80 \mu_1^2 (2568 \mu_2+25465)+1024000 \mu_1 \mu_3\\
		&+&32000 (4 \mu_2+117)^2\Big)\epsilon^8+\mathcal{O}(\epsilon^9)\\
		d&=& \frac{2401}{10485760000} \mu_1^4\epsilon^4-\frac{2401}{4194304000000} \mu_1^3 \left(107 \mu_1^2-400 (4 \mu_2+117)\right)\epsilon^5\\
		&+&\frac{49}{13421772800000000} \mu_1^2 \Big[2809807 \mu_1^4-19600\mu_1^2 (4280 \mu_2+69141)\\
		&+&250880000 \mu_1\mu_3+480000 \Big(784 \mu_2^2+45864 \mu_2+977961\Big)\Big]\epsilon^6+\mathcal{O}(\epsilon^{7}).
	\end{eqnarray*}
	Note that because of boundary conditions \eqref{relestab}, both $b=0$ and $d=0$, gives us $\mu_1=0$. Then, taking $\mu_1=0$, the expressions of $a$, $b$ and $d$ reduce to

	\begin{dgroup}\label{N5conditions}
		\begin{dmath*}
			a=\frac{1}{1638400}(28 \mu_2-141) (28 \mu_2+1779)\epsilon^4+\frac{49}{204800} (4 \mu_2+117) \mu_3\epsilon^5+\mathcal{O}(\epsilon^6)
		\end{dmath*}
		\begin{dmath*}
			b=-\frac{441}{26214400} (4 \mu_2+117)^2\epsilon^8-\frac{441}{3276800} (4 \mu_2+117) \mu_3\epsilon^9
			+\frac{441}{335544320000} \Big(27392 \mu_2^3+814880 \mu_2^2-4 \mu_2 (102400 \mu_4+2117649)
			-204800 \mu_3^2-11980800 \mu_4-259455339\Big)\epsilon^{10}+\mathcal{O}(\epsilon^{11})
		\end{dmath*}
		\begin{dmath*}
			d=\frac{1}{2684354560000}\left(784 \mu_2^2+45864 \mu_2+1592361\right)^2\epsilon^8\\
			+\frac{49}{167772160000} \left(3136 \mu_2^3+275184 \mu_2^2+11735532 \mu_2+186306237\right) \mu_3 \epsilon^{9}+\mathcal{O}(\epsilon^{10}).
		\end{dmath*}
	\end{dgroup}
	From the condition $b=0$ we obtain:
	
	\begin{eqnarray*}
		\mu_5^2&=&12-\frac{117}{4}\epsilon ^2+\frac{3541407}{102400} \epsilon ^4+\frac{924075}{32768} \epsilon ^5-\frac{11716785771}{262144000} \epsilon ^6,\\
		\mu_5^2&=&12-\frac{117}{4}\epsilon ^2+\frac{3541407}{102400} \epsilon ^4-\frac{924075}{32768} \epsilon ^5-\frac{11716785771}{262144000} \epsilon^6.
	\end{eqnarray*}

	\begin{figure}[!htb]
		\centering
		\includegraphics[scale=1]{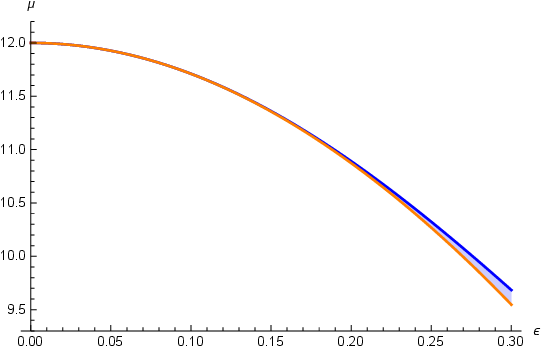}
		\caption{Boundary curves for $b=0$.}
		\label{fig5}
	\end{figure}
	
	As well as in the previous cases, if the condition $a>0$ were satisfied, then the shaded region in the Figure \ref{fig5} would be the instability region. However, we can see from the coefficient of the fourth order of $a$ in \eqref{N5conditions}, that $a$ can be positive only if $\mu_2<-\frac{1779}{28}$ or $\mu_2>\frac{141}{28}$ and $\mu_2=-\frac{117}{4}$ obtained from $b=0$ does not satisfy any of these two conditions.

	The second condition of \eqref{relestab}, $d=0$ and $a>0$, there is no real solution.
	
\end{enumerate}

\section{Conclusions}

The study of a linear and parametric stability of a subsystem of the isosceles problem generate by a perturbation of Euler's collinear solution was presented. In first moment, after performing the perturbation in the Euler's collinear solution and obtaining the spatial isosceles solution, where the masses of the base are located symmetrically with respect to the fixed line through the center of mass and parallel to the angular momentum, the circumference \eqref{cir-eq} of equilibria was found. In this situation, each equilibrium of the Hamiltonian system \eqref{hinicial} situated on this circumference corresponds to an elliptical collinear solution. Then, when performing the study of the linear stability of these equilibria, it was detected the presence of a double-zero eigenvalue resulting from the first integral $ \mathcal{Q}= -\langle \Sigma x, y \rangle$. This situation was circumvented by the Proposition \ref{dfr} that allows to reduce the degrees of freedom of this system from three to two, where now in this new simplified system, the value of $ \mathcal{Q}$ is constant and equal to $ \gamma$. With this, the second moment of this article is dedicated to the study of this simplified system, that presents a single equilibrium point when $ \gamma=1$. After obtaining the normal form of \eqref{h-red-linearizado} for $\epsilon=0$, we conclude by Dirichlet’s theorem that this equilibrium is stable. Now, as the system depends of parameters such as eccentricity $\epsilon $ and mass $\mu$, the linear stability can be affected by the presence of resonance in the system. Through Krein-Gelfand-Lidskii's theorem, it was possible found a countless of basic resonances and three double resonance. After utilizing the Deprit-Hori method in Kamel formulation, it was possible to get the autonomous and periodic Hamiltonian $K$ in each case and to discuss the problem of parametric stability in the cases of double resonance. Since the characteristic polynomial of $K$ is the form $p(\lambda)=\lambda^4+a\lambda^2+b$, the boundary conditions are as \eqref{relestab}, where in three cases only the condition $b=0$ allowed to build the boundary curves, in the form $ \mu=\mu_0^*+\mu_1\epsilon+\mu_2\epsilon^2+\mathcal{O}(\epsilon^3)$, which delimit the regions of stability and instability in the parametric plane $(\epsilon,\mu)$, however for such coefficients $ \mu_i$, $i=1,2\ldots$ obtained in each case, the condition $a>0$ is not satisfied and then it cannot be guaranteed that the shaded regions are of instability.

\section{Acknowledgments}

This work is result of PhD research supported by a Conselho Nacional de Desenvolvimento Científico e Tecnológico (CNPq) scholarship,  supervised by Hildeberto Cabral to whom I am very grateful for his many valuable contributions. I am also thanks to Adecarlos Carvalho for the implemention of the calculations.

\end{document}